\newcommand{\rddots}{\rotatebox[origin=c]{45}{$\cdots$}}
\newtheorem{theorem}{Theorem}[section]  % 主计数器
\newtheorem{lemma}[theorem]{Lemma}      % 共享 theorem 的计数器
\newtheorem{prop}[theorem]{Proposition}
\newtheorem{coro}[theorem]{Corollary}
\newtheorem{conj}[theorem]{Conjecture}
\newtheorem{prob}[theorem]{Problem}
\newtheorem{deff}[theorem]{Definition}
\newcommand{\nddots}{\rotatebox[origin=c]{15}{$\ddots$}}
\title{\LARGE
\textbf{Regular 3-polytopes of type $\{n,n\}$}}
\author{
  \parbox[t]{0.35\textwidth}{
    \large\textbf{Mingchao Li}\textsuperscript{1} \\
    \small\href{mailto:mli@sjtu.edu.cn}{lmc3051@sjtu.edu.cn}
  }
  \hspace{0.15em}
  \parbox[t]{0.35\textwidth}{
    \large\textbf{Wei-Juan Zhang}\textsuperscript{2} \\
    \small\href{mailto:weijuanzhang@mail.xjtu.edu.cn}{weijuanzhang@mail.xjtu.edu.cn}
  }
}
\affil{
  \small\textsuperscript{1}School of Mathematical Sciences, Shanghai Jiao Tong University, Shanghai 200240, China \\
  \small\textsuperscript{2}School of Mathematics and Statistics, Xi'an Jiaotong University, Xi'an 710049, China
}
\date{}   
\begin{document}

\maketitle

\begin{abstract}
%For each integer $ n \geq 3 $, we construct a regular 3-polytope $ \mathcal{P} $ of type $ \{n, n\} $ with $ 2^n n $ flags, which contributes to answering a few questions for regular polytopes. The automorphism group $\rm Aut(\mathcal{P})$ is realized as the semidirect product $ \mathbb{F}_2^{n-1} \rtimes D_{2n} $, and an explicit group presentation is provided.

For each integer \( n \geq 3 \), we construct a self-dual regular 3-polytope \( \mathcal{P} \) of type \( \{n, n\} \) with \( 2^n n \) flags, resolving two foundamental open questions on the existence of regular polytopes with certain Schläfli types. The automorphism group \( \operatorname{Aut}(\mathcal{P}) \) is explicitly realized as the semidirect product \( \mathbb{F}_2^{n-1} \rtimes D_{2n} \), where \( D_{2n} \) is the dihedral group of order \( 2n \), with a complete presentation for \( \operatorname{Aut}(\mathcal{P}) \) is provided. This advances the systematic construction of regular polytopes with prescribed symmetries.  

%\textcolor{red}{Finally, we discuss potential approaches to characterize regular polytopes of order $ 2^n p $, where $ p $ is an odd prime.}
 \vskip 2mm

\textbf{Keywords:} Regular 3-polytopes, string C-groups, automorphism groups.

\end{abstract}

%%%%%%%%%%%%%%%%%%%%%%
\section{Introduction}
\label{s:1}

Regular polytopes, as highly symmetric geometric structures, have fascinated mathematicians since ancient times. Classical examples such as the Platonic solids in three dimensions and their higher-dimensional generalizations (e.g., the  24-cell and hypercubes) epitomize the interplay between geometry, symmetry, and combinatorial elegance. In the late 20th century, the theory of abstract regular polytopes emerged as a unifying framework to analyze their combinatorial and algebraic essence.  

Formally, abstract regular polytopes are defined as ranked partially ordered sets satisfying the diamond condition and strong connectivity of flags which are their maximal chains, with automorphism groups acting transitively on their flags. This axiomatization, rooted in the work of Tits and later refined by McMullen and Schulte, allows for the classification of regular polytopes through group-theoretic structures, particularly via quotients of Coxeter groups. Such an approach bridges discrete geometry, combinatorics and group theory.  

McMullen and Schulte started constructing regular polytopes in \cite{MS90} and gave a systematic research on them in \cite{MS}. For any given regular polytope, Pellicer has constructed its regular extensions with any even number at first entry of the Schl\"{a}fli symbol (see \cite{P09}). Gabe introduced the `mixing' method and used it to construct  regular polytopes in \cite{Cu12}. 
A regular \(d\)-polytope \(\mathcal{P}\) of type \(\{p_1, \ldots, p_{d-1}\}\) is called \textit{tight} if it achieves the minimal flag count \(2p_1p_2\cdots p_{d-1}\) (see \cite{SmallestRegPolys}). Conder determined the smallest regular polytopes for each rank in \cite{SmallestRegPolys}, establishing that the minimal examples for ranks \( d \geq 9 \) are a family of tight regular polytopes of type \(\{4,\ldots,4\}\). He subsequently gave an atlas of regular polytopes with fewer than 4000 flags in \cite{RegPolys4000}. Moreover, Conder and Gabe in \cite{CC} proved that there exists a tight orientably-regular polytope of type $\{p_1,\cdots,p_{n-1}\}$ if and only if $p_i$ is an even divisor of $p_{i-1}p_{i+1}$ when defined. Leemans (with coauthors) has contributed much to the construction and classification of regular polytopes with almost simple automorphism groups. In particular, Leemans and Mulpas determined all regular polytopes with the Suzuki, Rudvalis and O'Nan sporadic groups as automorphism groups in \cite{LM}, by developing several new and powerful algorithms  for searching string C-group representations of groups.   

Among regular polytopes that have been found, it is an interesting observation that polytopes whose Schl\"{a}fli types consist entirely of odd numbers seem to be rarely discovered. A natural and fundamental inquiry arises: 
\begin{prob}\label{prob1}
Do regular 3-polytopes of type $\{m, m\}$ exist for arbitrary odd $m$? 
\end{prob}
While this serves as the starting point for our work, an additional motivation emerges from a sequence of relevant studies below. 
\medskip

%In 2006, Schulte and Weiss proposed the following question \cite{SW06}:
%\begin{prob}
%Characterize the groups of order $2^n$ or $2^np$, with $n$ a positive integer and $p$ an odd prime, which are automorphism groups of regular polytopes.
%\end{prob}

In 2006, Schulte and Weiss \cite{SW06} proposed the question of characterizing groups of order $2^n$ or $2^np$ (for odd primes $p$) as automorphism groups of regular polytopes. For 2-groups, as mentioned, Conder \cite{SmallestRegPolys} constructed a tight regular polytope of type $\{4,\cdots,4\}$ for each rank $d\ge 2$. Subsequent advances \cite{HFL19, HFL20} by Hou, Feng and Leemans established the existence of regular $d$-polytopes with types $\{2^{k_1},\cdots,2^{k_{d-1}}\}$ and automorphism groups of order $2^n$, provided $d\ge 3,n\ge 5,$ $k_1,\cdots,k_{d-1}\ge 2$ and $k_1+\cdots+k_{d-1}\le n-1$.  
%constructed a regular $d$-polytope of Schl\"{a}fli type $\{2^s,2^t\}$ and order $2^n$ for $s,t\ge2,n\ge 10,n-1\ge s+t$ (see \cite{HFL19}). Subsequent work \cite{HFL20} by them established that there exists a regular $d$-polytope whose automorphism group order is $2^n$ and whose Schl\"{a}fli type is $\{2^{k_1},\cdots,2^{k_{d-1}}\}$ when $d\ge 3,n\ge 5,$ $k_1,\cdots,k_{d-1}\ge 2$ and $k_1+\cdots+k_{d-1}\le n-1$. 

For the latter case, recently in \cite{HFL24}, Hou et al. showed that if a regular 3-polytope of order $2^np$ has type $\{k_1,k_2\}$, then $p$ divides $k_1$ or $k_2$. This divides possible types into two classes up to duality, 
namely Type (1) when $k_1=2^sp$ and $k_2=2^t$, and Type (2) for $k_1=2^sp$ and $k_2=2^tp$. 
%Moreover, they constructed a regular 3-polytope of order $2^nl_1l_2$ with Schl\"{a}fli type $\{2^sl_1,2^tl_2\}$ when $s,t\ge 2$, $s+t\le n-1$, and $l_1$ and $l_2$ are both odd. Taking $l_1=p,l_2=1$, they obtained polytopes of Type (1). 
While Type (1) was addressed in \cite{HFL24}---two certain families of regular 3-polytopes of Type (1) were constructed, Type (2) posed more challenges. The authors succeeded only for \( p = 3 \), producing a family of type \( \{6, 6\} \) polytopes with \( 3 \cdot 2^n \) flags (\( n \geq 5 \)). Notably, no Type (2) examples were found in the literature for primes \( p \geq 5 \), leaving the following question open:  
\begin{prob}\label{prob2}
Do regular 3-polytopes of types $\{2^sp, 2^tp\}$ with $2^np$ flags exist for primes $p \geq 5$ and some integers $s,\, t$ and $n$?
\end{prob}

In this paper, we resolve these questions by giving the following result.
\begin{theorem}\label{thm:main}
For each positive integer $n\ge 3$, there exists a self-dual regular 3-polytope $\mathcal{P}$ of Schl\"{a}fli type $\{n,n\}$ with $2^nn$ flags.
\end{theorem}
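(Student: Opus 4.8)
The plan is to realise the claimed group $G=\F_2^{\,n-1}\rtimes D_{2n}$ concretely and then read off the polytope from a string C-group structure on it. I would take the permutation module $\F_2^{\,n}$ of the dihedral group $D_{2n}=\langle\sigma,\tau\rangle$ acting on the vertex set $\mathbb{Z}_n$ of an $n$-gon (so $\sigma\colon e_j\mapsto e_{j+1}$ and $\tau\colon e_j\mapsto e_{-j}$), let $U=\{v\in\F_2^{\,n}:\sum_j v_j=0\}$ be the $(n-1)$-dimensional sum-zero submodule, and set $G=U\rtimes D_{2n}$, which literally has order $2^{n-1}\cdot 2n=2^n n$. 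Inside $G$ I would use the generators
\[
\rho_0=(0,\sigma\tau),\qquad \rho_1=(0,\tau),\qquad \rho_2=(e_0+e_1,\sigma\tau),
\]
noting that $a:=\sigma\tau$ and $b:=\tau$ are reflections with $ab=\sigma$ of order $n$ (so $\langle a,b\rangle=D_{2n}$) and that $u:=e_0+e_1\in U$ is fixed by $a$.

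The first block of verifications is routine: each $\rho_i$ is an involution (for $\rho_2$ this uses $au=u$), the element $\rho_0\rho_2=(u,1)\in U$ has order $2$, giving the string relation $(\rho_0\rho_2)^2=1$, and $\rho_0\rho_1=(0,\sigma)$ has order exactly $n$. For $\rho_1\rho_2=(e_0+e_{n-1},\sigma^{-1})$ the order is a multiple of $n$, and a short telescoping computation shows $\sum_{i=0}^{n-1}\sigma^{-i}(e_0+e_{n-1})=0$, whence $(\rho_1\rho_2)^n=1$ and the order is exactly $n$; so the Schläfli type is $\{n,n\}$ once the intersection condition is established. That the $\rho_i$ generate $G$ follows because $\langle\rho_0,\rho_1\rangle=\{(0,g):g\in D_{2n}\}$ is a complement, and conjugating $\rho_0\rho_2=(u,1)$ by it produces $(\sigma^k u,1)=(e_k+e_{k+1},1)$ for all $k$, and these span $U$; hence $\langle\rho_0,\rho_1,\rho_2\rangle\supseteq U$ and so equals $G$, confirming the flag count $|G|=2^n n$.

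For the intersection condition I would observe that $\langle\rho_1,\rho_2\rangle$ also projects isomorphically onto $D_{2n}$, so it is a second complement, of the form $\{(d(g),g):g\in D_{2n}\}$ for the $1$-cocycle $d\colon D_{2n}\to U$ determined by $d(a)=u$, $d(b)=0$; this is well defined, since $d(a^2)=u+au=0$, $d(b^2)=0$, and $d((ab)^n)=d(\sigma^n)=0$. Then $\langle\rho_0,\rho_1\rangle\cap\langle\rho_1,\rho_2\rangle=\{(0,g):d(g)=0\}$, and the cocycle identity gives $d(\sigma^k)=e_0+e_k\neq 0$ for $1\le k\le n-1$, so $\ker d$ contains no nontrivial rotation, hence $\ker d=\langle b\rangle$ and the intersection is exactly $\langle\rho_1\rangle$. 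Thus $(G;\rho_0,\rho_1,\rho_2)$ is a string C-group, and it defines a regular $3$-polytope $\mathcal{P}$ of type $\{n,n\}$ with $2^n n$ flags. Self-duality then comes from the automorphism $\delta(v,g)=(v+d(g),g)$: it is a homomorphism precisely because $d$ is a cocycle, it is an involution because $\F_2$ has characteristic $2$, and it sends $(0,a)\mapsto(u,a)$, $(0,b)\mapsto(0,b)$, $(u,a)\mapsto(u+u,a)=(0,a)$, i.e. $\rho_0\leftrightarrow\rho_2$ and $\rho_1\mapsto\rho_1$, the reversal realising the dual $(G;\rho_2,\rho_1,\rho_0)$. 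Finally, to obtain the promised presentation of $\operatorname{Aut}(\mathcal{P})$ I would adjoin to the Coxeter relations of $[n,n]$ a small set of extra relators expressing that the conjugates of $\rho_0\rho_2$ commute and satisfy the single linear dependence coming from "sum-zero", and check by the same spanning-set count that the presented group has order at most $2^n n$, hence equals $G$.

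I expect the genuine obstacle to be the intersection condition — identifying $\langle\rho_0,\rho_1\rangle\cap\langle\rho_1,\rho_2\rangle$ exactly — together with verifying uniformly in $n$ that the chosen vector $u$ generates $U$ as a $D_{2n}$-module, which is what prevents a smaller quotient of $G$ from occurring and thus what pins the flag count to $2^n n$ (an earlier, more symmetric-looking choice of $u$ fails for even $n$, so the choice above, for which $\{\sigma^k u\}=\{e_k+e_{k+1}\}$ is the standard spanning set of $U$, is doing real work). The telescoping identities $d(\sigma^k)=e_0+e_k$ and $\sum_i\sigma^{-i}(e_0+e_{n-1})=0$ are what make the type, the generation, and the intersection condition all go through at once, and the characteristic-$2$ structure is what makes the duality automorphism automatic.
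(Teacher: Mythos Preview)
Your argument is correct, and at the level of the construction it coincides with the paper's: both build $G=\F_2^{\,n-1}\rtimes D_{2n}$ with $\rho_0,\rho_1$ lying in the dihedral complement and $\rho_2$ a translate of $\rho_0$ by a vector fixed by that reflection. In fact your sum-zero permutation module, taken in the basis $f_i=e_i+e_{i+1}$, is exactly the paper's module---$\sigma$ acts on the $f_i$ by the companion matrix $A$ of $1+\lambda+\cdots+\lambda^{n-1}$ that the paper writes down explicitly.

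Where the two routes diverge is in the verification. The paper proves three matrix lemmas about $A$, invokes Hamilton--Cayley to obtain $\sum_{i=0}^{n-1}A^i=0$ and hence $(\rho_1\rho_2)^n=1$, and checks the intersection condition by listing the $2n$ elements of $\langle\rho_1,\rho_2\rangle$ and observing that only two of them have zero vector part. For self-duality it takes a detour: it first proves (via a coset-closure/Reidemeister-style count) that $G$ has the presentation with extra relators $\bigl((\rho_1\rho_2)^k(\rho_0\rho_1)^k\bigr)^2=1$, and then reads off the $\rho_0\leftrightarrow\rho_2$ symmetry of the relators. Your cocycle packaging is tighter: recognising $\langle\rho_1,\rho_2\rangle$ as the graph $\{(d(g),g)\}$ of a $1$-cocycle reduces the intersection condition to the single computation $\ker d=\langle b\rangle$, and the same $d$ hands you the duality involution $\delta(v,g)=(v+d(g),g)$ for free in characteristic~$2$, with no presentation needed. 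The paper's payoff for its longer road is the explicit presentation of $\mathrm{Aut}(\mathcal{P})$ (its Section~3.3 theorem); your closing sketch of that step is in the right spirit but would need the concrete relator list and order bound to be a proof, whereas the paper carries this out in full. For Theorem~\ref{thm:main} itself, however, your proof is complete as written.
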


This result advances the field in three key directions:  
\begin{enumerate}
\item [{\rm (1)}] Theorem \ref{thm:main} resolves Problem \ref{prob1} affirmatively by constructing, for every odd integer \( m \geq 3 \), a regular 3-polytope with Schläfli type \( \{m, m\} \). \\[-15pt]  
 \item [{\rm (2)}] For primes $p \geq 2$ and $s = t\ge 0$, Theorem \ref{thm:main} yields regular polytopes of type $\{2^sp, 2^sp\}$ with $2^{(2^sp+s)}p$ flags, which are polytopes of Type (2) when $p\ge 3$. This addresses Problem \ref{prob2}. \\[-15pt]
  \item [{\rm (3)}] The construction applies equally to composite or even $n$, subsuming classical cases (e.g., the tetrahedron $\{3,3\}$ when $n=3$) and revealing polytopes like $\{4,4\}$ with $64$ flags, unifying Schläfli types with purely odd and even entries.
\end{enumerate}

%We establish the theoretical framework for our work by providing a foundational overview of regular polytopes and the key group-theoretic tools central to the study in Section \ref{s:2}. Afterwards, our novel construction will be introduced in Section \ref{s:3} and the explicit presentation will be given afterwards. 

Our theoretical framework is developed in Section~\ref{s:2}, which provides essential background on regular polytopes and their associated group-theoretic tools. Section~\ref{s:3} then introduces our novel construction for type $\{n,n\}$ polytopes, followed by the explicit automorphism group presentation.

%%%%%%%%%%%%%%%%%%%%%%
\section{Further background}\label{s:2}
\subsection{Regular polytopes and string C-groups}
%%%%%%%%%%%%%%%%%%%%%%

Abstract polytopes are combinatorial structures that generalize the classical notion of convex polytopes, such as the Platonic solids, by axiomatizing their incidence relations independently of geometric realization. Formally, 
% \begin{deff}
%     An abstract polytope $\mathcal{P}$ of rank $d$ is a partially ordered set equipped with a rank function satisfying the following four properties:
%     \begin{enumerate}
%         \item [{\rm (1)}] $\mathcal{P}$ has a minimal element $F_{-1}$ and a maximal element $F_d$.
%         \item [{\rm (2)}] The maximal totally ordered sets are called flags. Every flag contains exactly $d+2$ elements including $F_{-1},F_d$.
%         \item [{\rm (3)}] Given two elements $F,G$ such that $\mathrm{rank}(G)-\mathrm{rank}(F)=2$, there exists exactly two faces $H_1,H_2$ such that $F\le H_i\le G$.
%         \item [{\rm (4)}] Given two flags $\Phi,\Psi$, there is a sequence of flags $\Phi=\Phi_0,\Phi_1,\cdots,\Phi_k=\Psi$ such that consecutive flags $\Phi_{i-1},\Phi_i$ differ by exactly one element.
%     \end{enumerate}
% \end{deff}
\begin{deff}  
An \textit{abstract polytope} $\mathcal{P}$ of rank $d$ is a partially ordered set (poset) equipped with a rank function $\mathrm{rank}: \mathcal{P} \to \{-1,0,\ldots,d\}$, satisfying the following axioms:  
\begin{enumerate}[label=(\roman*)]  
    \item $\mathcal{P}$ contains a unique minimal element $F_{-1}$ (the \textit{empty face}) and a unique maximal element $F_d$ (the \textit{improper face}). \\[-15pt] 
    \item Every maximal totally ordered subset (\textit{flag}) contains exactly $d+2$ elements, including $F_{-1}$ and $F_d$.  \\[-15pt]  
    \item (\textit{Diamond condition}) For any two elements $F, G \in \mathcal{P}$ with $\mathrm{rank}(G) - \mathrm{rank}(F) = 2$, there exist exactly two elements $H_1, H_2$ such that $F \leq H_i \leq G$.   \\[-15pt] 
    \item (\textit{Strong connectivity}) Any two flags $\Phi, \Psi$ can be connected by a sequence of flags $\Phi = \Phi_0, \Phi_1, \ldots, \Phi_k = \Psi$, where consecutive flags differ by exactly one element.  
\end{enumerate}  
\end{deff}

Elements of rank $i$ in $\mathcal{P}$ are called \textit{$i$-faces}. An \textit{automorphism} of $\mathcal{P}$ is an order-preserving bijection from $\mathcal{P}$ to itself, mapping any $i$-face to an $i$-face and preserving the partial order structure. The collection of all automorphisms forms the automorphism group $\mathrm{Aut}(\mathcal{P})$.
Due to the combinatorial axioms of strong connectedness and the diamond condition, the stabilizer subgroup of any flag in \(\mathrm{Aut}(\mathcal{P})\) must be trivial. Consequently, every automorphism is uniquely determined by its action on a chosen flag, leading to the cardinality constraint \(|\mathrm{Aut}(\mathcal{P})| \leq f\), where \(f\) denotes the total number of flags. When equality holds, then \(\mathrm{Aut}(\mathcal{P})\) acts transitively on the flags, and we call $\mathcal {P}$ a \textit{regular} polytope. It achieves perfect alignment between the polytope's combinatorial structure and the algebraic properties of its automorphism group.

%In an abstract polytope \(\mathcal{P}\), elements of rank \(i\) are called \textit{\(i\)-faces}, whose symmetries are fully characterized by the automorphism group \(\mathrm{Aut}(\mathcal{P})\) --- the set of all order-preserving bijections on \(\mathcal{P}\). The regularity of \(\mathcal{P}\) manifests through the transitive action of \(\mathrm{Aut}(\mathcal{P})\) on its flags, meaning any flag can be mapped to any other by an automorphism. Due to the combinatorial axioms of strong connectedness and the diamond condition, the stabilizer subgroup of any flag in \(\mathrm{Aut}(\mathcal{P})\) must be trivial. Consequently, every automorphism is uniquely determined by its action on a chosen flag, leading to the cardinality constraint \(|\mathrm{Aut}(\mathcal{P})| \leq f\), where \(f\) denotes the total number of flags. When equality holds, \(\mathrm{Aut}(\mathcal{P})\) acts \textbf{regularly} (freely and transitively) on the flags, achieving perfect alignment between the polytope's combinatorial structure and the algebraic properties of its automorphism group.

%Regularity imposes strict algebraic constraints, linking these combinatorial objects to a specialized class of groups.  

\begin{deff}  
A \textit{string group generated by involutions} (sggi) is a pair $(G, \{\rho_0, \ldots, \rho_{d-1}\})$, where $G$ is a finite group generated by involutions $\rho_i$ ($\rho_i^2 = 1$) satisfying:  
\begin{enumerate}[label=(\roman*)]  
    \item Non-adjacent generators commute: $(\rho_i \rho_j)^2 = 1$ for $|i - j| > 1$.   \\[-15pt] 
    \item Adjacent generators satisfy $(\rho_{i-1} \rho_i)^{p_i} = 1$ for some $p_i \geq 2$.  
\end{enumerate}  
The relations given above are precisely the defining relations for the Coxeter
group $[\,p_1,\, p_2,\, \dots,\, p_{d-1}\,]$, with string diagram.
If $G$ additionally satisfies the \textit{intersection condition}:   \\[-5pt] 
\[
\left\langle \rho_i \mid i \in I \right\rangle \cap \left\langle \rho_j \mid j \in J \right\rangle = \left\langle \rho_k \mid k \in I \cap J \right\rangle \quad \text{for all } I, J \subseteq \{0, \ldots, d-1\},  
\]   
then $(G, \{\rho_0, \ldots, \rho_{d-1}\})$ is called a \textit{string C-group}.  
\end{deff}  

The equivalence between regular polytopes and string C-groups is foundational:  

\begin{theorem}[{\cite{MS}}]  
There is a one-to-one correspondence between
regular $d$-polytopes $\mathcal{P}$ and string C-groups $(G, \{\rho_0, \ldots, \rho_{d-1}\})$ of rank $d$.  
\end{theorem}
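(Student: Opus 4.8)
The plan is to prove the correspondence by exhibiting maps in both directions and verifying that they are mutually inverse, following the framework of \cite{MS}. Throughout I fix the rank $d$ and write $G$ for an automorphism group on the polytope side and for the abstract group on the group side.

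For the forward direction (from a regular polytope to a string C-group), fix a base flag $\Phi = \{F_{-1}, F_0, \ldots, F_d\}$ of a regular $d$-polytope $\mathcal{P}$, and let $G = \mathrm{Aut}(\mathcal{P})$. By the diamond condition, for each $i \in \{0, \ldots, d-1\}$ there is a unique flag $\Phi^i$ differing from $\Phi$ only in its $i$-face; since $G$ acts sharply transitively on flags, there is a unique $\rho_i \in G$ with $\Phi\rho_i = \Phi^i$. I would then verify, in order, that each $\rho_i$ is an involution (because applying the $i$-adjacency twice returns $\Phi$), that the $\rho_i$ generate $G$ (strong connectivity lets one reach any flag from $\Phi$ through a chain of adjacent flags, and sharp transitivity identifies the corresponding product of generators), that $(\rho_i\rho_j)^2 = 1$ whenever $|i-j| \geq 2$ (two non-adjacent exchanges act on independent faces of the flag and hence commute), and finally that the intersection condition holds. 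This last verification is the subtler part of the forward direction and again leans on strong connectivity.

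For the reverse direction, given a string C-group $(G, \{\rho_0, \ldots, \rho_{d-1}\})$, I would build the coset poset. For each $i$ set $\Gamma_i = \langle \rho_j \mid j \neq i \rangle$, declare the $i$-faces to be the right cosets $\Gamma_i g$, adjoin a unique $(-1)$-face and a unique $d$-face below and above everything, and define an $i$-face to be incident to a $j$-face with $i < j$ precisely when the two cosets have nonempty intersection. I would then check the four polytope axioms: the rank function and the unique minimal and maximal faces are immediate; flags correspond bijectively to elements of $G$ and so have exactly $d+2$ entries; the diamond condition holds because the two $i$-faces lying in any rank-$1$ section arise from the two elements of $\langle \rho_i \rangle = \{1,\rho_i\}$, using $\rho_i \in \Gamma_{i-1}\cap\Gamma_{i+1}$ but $\rho_i\notin\Gamma_i$; and strong connectivity is exactly where the intersection condition becomes indispensable, guaranteeing that the coset geometry is flag-connected rather than merely a string group of involutions whose coset geometry fails to be a polytope. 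The group $G$ acts on this poset by right multiplication, fixing the base flag $\{\Gamma_i\}$ only at the identity, so the action is sharply flag-transitive and $\mathcal{P}$ is regular with $\mathrm{Aut}(\mathcal{P}) = G$.

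Finally, I would confirm the two constructions are mutually inverse. Starting from a string C-group, forming its coset poset, and reading off the distinguished generators at the base flag $\{\Gamma_i\}$ returns the original $\rho_i$; conversely, starting from a regular polytope and sending each $i$-face to the coset $\Gamma_i g$ of an automorphism carrying $\Phi$ to a flag through that face yields an order isomorphism back onto $\mathcal{P}$. The main obstacle is the reverse direction, and specifically the proof that the coset poset satisfies strong connectivity together with the diamond condition: this is precisely the content that separates genuine string C-groups, where the intersection condition holds, from arbitrary string groups generated by involutions whose coset geometries need not be polytopes.
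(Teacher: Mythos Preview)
Your outline is correct and follows the standard approach. Note, however, that the paper does not actually prove this theorem: it is stated with a citation to \cite{MS}, followed only by a brief expository paragraph describing each direction of the correspondence (distinguished generators from a base flag in one direction, the coset poset on the subgroups $G_i=\langle\rho_j\mid j\neq i\rangle$ in the other). Your proposal is a faithful and more detailed expansion of exactly that sketch, so there is nothing to compare beyond noting that you have filled in the verifications the paper leaves to the reference.
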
  

For a $d$-regular polytope $\mathcal{P}$, we fix a base flag $\Phi$. The automorphism group $G = \mathrm{Aut}(\mathcal{P})$ is generated by involutions $\rho_0,\, \rho_1,\, \dots,\, \rho_{d-1}$, where $\rho_i$ maps $\Phi$ to the unique flag differing at the $i$-face from $\Phi$. These generators form a string C-group, with the intersection condition reflecting the polytope’s face lattice.  

%Conversely, given a string C-group $(G, \{\rho_0,\, \dots,\, \rho_{d-1}\})$, we can construct a poset with  
%\begin{itemize}  
%    \item Rank $-1$ and $d$ elements are the empty face and improper face.  
%    \item Rank $i$ elements ($0 \leq i \leq d-1$) are right cosets of $G_i = \langle \rho_j \mid j \neq i \rangle$.  
%    \item Incidence is defined by non-empty coset intersections.  
%\end{itemize}  
%This poset satisfies the axioms of a regular polytope, with $G$ acting transitively on its flags.  
Conversely, given a string C-group \((G, \{\rho_0, \ldots, \rho_{d-1}\})\), a regular \(d\)-polytope can be constructed as follows: the poset is defined by setting the empty face as the unique minimal element (rank \(-1\)) and the group \(G\) itself as the maximal element (rank \(d\)). For each intermediate rank \(i\) (\(0 \leq i \leq d-1\)), the \(i\)-faces correspond to the right cosets of the subgroup \(G_i = \langle \rho_j \mid j \neq i \rangle\). Incidence between two faces is determined by non-empty intersection of their corresponding cosets, ensuring the poset satisfies the combinatorial axioms of an abstract polytope. Crucially, the group \(G\) acts transitively on the flags of this structure via right multiplication on cosets, thereby realizing the polytope’s regularity and confirming that its automorphism group is isomorphic to \(G\). 
%This construction directly links the algebraic properties of string C-groups to the geometric-combinatorial structure of regular polytopes.

%This enables the study of regular polytopes through group theory.  
%Group-theoretic tools, thus provide powerful methods to classify regular polytopes, uncover new families and explore their symmetries. 
The above group-theoretic perspective enables the study of regular polytopes through algebraic methods, revealing new families and symmetries.  

Let \(\mathcal{P}\) be a regular \(d\)-polytope with base flag \(\Phi\) and distinguished generators \(\rho_0, \ldots, \rho_{d-1}\). Its \textit{dua}l \(\mathcal{P}^*\), obtained by reversing the partial order of \(\mathcal{P}\), has distinguished generators \(\rho_{d-1}, \ldots, \rho_0\).
We say that $\mathcal{P}$ is self-dual if $\mathcal{P}$ is
isomorphic to its dual $\mathcal{P}^*$. In this case, there exists an incidence reversing bijection of $\mathcal{P}$
onto itself. The following proposition characterizes self-duality algebraically (\cite{MS}):  

\begin{prop}\label{prop:selfdual}  
Let $\mathcal{P}$ be the regular d-polytope associated
with the string C-group $G=\langle \rho_0,\, \rho_1,\, \dots,\, \rho_{d-1}\rangle$. Then $\mathcal{P}$ is self-dual if and only if there exists an
involutory group automorphism $\delta$ of $G$ such that $\delta(\rho_i)=\rho_{d-1-i} $ for each $i=0,\, 1,\, \dots,\, d-1$.

\end{prop}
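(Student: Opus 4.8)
The plan is to run everything through the McMullen--Schulte dictionary between regular $d$-polytopes and string C-groups stated above, using the single extra observation that forming the dual polytope amounts, on the group side, to reversing the order of the distinguished generators. First I would pin down that observation precisely. Fix the base flag $\Phi=(F_{-1},F_0,\dots,F_{d-1},F_d)$ of $\mathcal P$; reversing the partial order turns this chain into the base flag $\Phi^{*}$ of $\mathcal P^{*}$, in which the old $i$-face $F_i$ now has rank $d-1-i$. The distinguished generator of $\mathcal P^{*}$ of rank $j$ is the unique automorphism carrying $\Phi^{*}$ to the adjacent flag differing only in its rank-$j$ face; since that face is $F_{d-1-j}$, the automorphism in question is exactly $\rho_{d-1-j}$. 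Hence the string C-group attached to $\mathcal P^{*}$ is $G$ itself equipped with the ordered generating set $(\rho_{d-1},\rho_{d-2},\dots,\rho_{0})$.

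For the ``if'' direction, given $\delta\in\mathrm{Aut}(G)$ with $\delta(\rho_i)=\rho_{d-1-i}$, the map $\delta$ is by definition an isomorphism of string C-groups from $(G,(\rho_0,\dots,\rho_{d-1}))$ to $(G,(\rho_{d-1},\dots,\rho_0))$, i.e.\ from the string C-group of $\mathcal P$ to that of $\mathcal P^{*}$. The correspondence theorem then yields $\mathcal P\cong\mathcal P^{*}$, so $\mathcal P$ is self-dual. Involutivity of $\delta$ plays no role in this direction; it will fall out automatically below.

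For the ``only if'' direction, suppose $\mathcal P$ is self-dual, so some polytope isomorphism $\mathcal P\to\mathcal P^{*}$ exists. Composing it with a suitable automorphism of the regular polytope $\mathcal P^{*}$ — available by flag-transitivity — I may assume this isomorphism $\varphi$ sends $\Phi$ to $\Phi^{*}$. Since $\mathcal P$ and $\mathcal P^{*}$ have the same underlying set and an order-preserving bijection of one is an order-preserving bijection of the other, $\mathrm{Aut}(\mathcal P^{*})=\mathrm{Aut}(\mathcal P)=G$ as permutation groups, and conjugation $\delta\colon g\mapsto\varphi g\varphi^{-1}$ is a group automorphism of $G$ (the composite of an order-reversing, an order-preserving, and an order-reversing map is order-preserving, so $\varphi g\varphi^{-1}\in G$). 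Now $\delta(\rho_i)$ sends $\Phi^{*}=\varphi(\Phi)$ to $\varphi(\Phi^{i})$, where $\Phi^{i}$ is the flag of $\mathcal P$ differing from $\Phi$ only in its $i$-face; as $\varphi$ carries the $i$-face of $\mathcal P$ to the rank-$(d-1-i)$ face of $\mathcal P^{*}$, the flag $\varphi(\Phi^{i})$ is exactly the flag adjacent to $\Phi^{*}$ that differs in its rank-$(d-1-i)$ face, so by uniqueness of distinguished generators $\delta(\rho_i)=\rho_{d-1-i}$. Finally $\delta^{2}(\rho_i)=\rho_{d-1-(d-1-i)}=\rho_i$ for every $i$, hence $\delta^{2}=\mathrm{id}$ and $\delta$ is involutory.

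The only delicate part is the bookkeeping in the second direction: identifying $\mathrm{Aut}(\mathcal P^{*})$ with $\mathrm{Aut}(\mathcal P)$, checking that conjugation by the order-reversing map $\varphi$ still normalises $G$, and matching $\varphi\rho_i\varphi^{-1}$ with the correct distinguished generator $\rho_{d-1-i}$ of the dual. Everything else is a direct unwinding of the polytope/string-C-group correspondence and of the definition of the distinguished generators.
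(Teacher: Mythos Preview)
The paper does not actually prove this proposition: it is quoted from McMullen and Schulte's monograph \cite{MS} and stated without argument. Your proof is correct and is precisely the standard argument one finds in that reference, so there is nothing to compare against here.
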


%%%%%%%%%%%%%%%%%%%%%%
\subsection{Group theory}

%When working with group presentations, the following proposition provides a computational method to determine an upper bound for the order of a group defined by a specific presentation (see \cite{xumingyao}).
%
%\begin{prop}
%\label{prop:upperbound}
%    Let $G$ be an abstract group, $N\le G$ and $|N|\le n$. Set $S$ contains some cosets:
%    \[
%    S=\{Nh_0=N,Nh_1,\cdots,Nh_{m-1}\}
%    \]
%    For each element $g\in G$, if the action of $g$ on $S$ is closed, meaning $Nh_ig\in S,0\le i\le m-1$, we have $|G|\le mn$.
%\end{prop}
%The proof directly utilizes properties of transitive permutation representations. When verifying whether the coset $ N h_i g $ belongs to the set $ S $, since any two cosets are either identical or disjoint, it suffices to select an arbitrary element $ n h_i g \in N h_i g $ and check whether $ n h_i g\in N h_j$ for some $j$.

The study of group presentations often requires practical tools to estimate group orders. The following proposition establishes a computational criterion to bound the order of a group by analyzing its action on a set of cosets.  

\begin{prop}\label{prop:upperbound}  
Let \( G \) be a group and \( N \leq G \) a subgroup with \( |N| \leq n \). Suppose there exists a set \( S = \{Nh_0, Nh_1, \ldots, Nh_{m-1}\} \) of distinct right cosets of \( N \) in \( G \). If every \( g \in G \) permutes the cosets in \( S \) under right multiplication—that is, for each \( Nh_i \in S \), the product \( Nh_i g \) also lies in \( S \)—then the order of \( G \) satisfies \( |G| \leq mn \).  
\end{prop}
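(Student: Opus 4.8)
The plan is to prove Proposition~\ref{prop:upperbound} directly by exhibiting a transitive action of $G$ on the set $S$ and then counting. First I would observe that the hypothesis says precisely that right multiplication gives a well-defined map $S \times G \to S$, $(Nh_i, g) \mapsto Nh_i g$; since $Nh_i(gg') = (Nh_i g)g'$ and $Nh_i \cdot 1 = Nh_i$, this is a genuine right action of $G$ on the finite set $S$ of size $m$. (Each $g$ acts as a permutation of $S$ because $g^{-1}$ provides a two-sided inverse.)

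Next I would pick a basepoint, say $Nh_0 \in S$ — without loss of generality $Nh_0 = N$ after renaming, but this is not even needed — and consider its stabiliser $\mathrm{Stab}_G(Nh_0) = \{g \in G : Nh_0 g = Nh_0\}$. The key algebraic point is that this stabiliser contains $h_0^{-1} N h_0$: indeed $Nh_0 g = Nh_0$ is equivalent to $h_0 g h_0^{-1} \in N$, i.e. $g \in h_0^{-1}Nh_0$, so in fact $\mathrm{Stab}_G(Nh_0) = h_0^{-1}Nh_0$, a subgroup of order at most $|N| \le n$. By the orbit–stabiliser theorem, the orbit $Nh_0 \cdot G$ has size $[G : \mathrm{Stab}_G(Nh_0)]$, hence
\[
|G| = |\mathrm{Stab}_G(Nh_0)| \cdot |Nh_0 \cdot G| \le n \cdot |Nh_0 \cdot G| \le n \cdot |S| = mn,
\]
where the last inequality uses that the orbit is contained in $S$.

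Alternatively, and perhaps more transparently for readers who prefer to avoid invoking a group action, one can argue as follows: the $m$ cosets in $S$ partition into $G$-orbits; $G$ acts transitively on the orbit $\mathcal{O}$ containing $Nh_0$, so the map $G \to \mathcal{O}$, $g \mapsto Nh_0 g$, is surjective, and its fibres are the left cosets of $\mathrm{Stab}_G(Nh_0) = h_0^{-1}Nh_0$, all of the same cardinality $\le n$; thus $|G| \le n\,|\mathcal{O}| \le n\,|S| = mn$. There is essentially no obstacle here — the statement is a routine orbit-counting argument — so the only thing to be careful about is the bookkeeping showing the stabiliser is a conjugate of (a subgroup of) $N$ and therefore has order at most $n$; I would state that computation explicitly since it is the crux of the bound.
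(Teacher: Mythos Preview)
Your proof is correct and follows essentially the same approach as the paper: establish that right multiplication gives an action of $G$ on $S$, identify the stabiliser of a coset $Nh_i$ as the conjugate $h_i^{-1}Nh_i$ (hence of order at most $n$), and apply the orbit--stabiliser theorem together with the fact that the orbit sits inside $S$. Your single-orbit argument is in fact slightly tidier than the paper's, which decomposes $S$ into all orbits before bounding, but the underlying idea is identical.
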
  

\begin{proof}  
%The closure of \( S \) under the right action of \( G \) implies that \( G \) acts on \( S \) by permutation. This action defines a homomorphism \( \phi: G \to \mathrm{Sym}(S) \), where \( \mathrm{Sym}(S) \) is the symmetric group on \( m \) elements. The kernel \( K = \ker(\phi) \) is the intersection of all stabilizers of cosets in \( S \).
%%, and thus \( K \leq N \). 
%%By the First Isomorphism Theorem, \( G/K \) embeds into \( \mathrm{Sym}(S) \), giving \( |G/K| \leq m! \). Since \( |K| \leq |N| \leq n \), it follows that \( |G| = |G/K| \cdot |K| \leq m! \cdot n \). 
%As $G$ acts transitively on the set $S$, by the orbit-stabilizer theorem: the stabilizer of \( Nh_0 \) under the action is \( N \), and the orbit of \( Nh_0 \) has size at most \( m \), so \( |G| = |\mathrm{Orb}(Nh_0)| \cdot |\mathrm{Stab}(Nh_0)| \leq m \cdot n \).  
The closure of \( S \) under the action of right multiplication of \( G \) implies that \( G \) acts on \( S \) by permutations. This action decomposes \( S \) into disjoint orbits \( \mathcal{O}_1, \mathcal{O}_2, \ldots, \mathcal{O}_k \) for some $k\ge 1$, where each orbit \( \mathcal{O}_j \) has size \( m_j \) and corresponds to a stabilizer subgroup \( \mathrm{Stab}(Nh_{i_j}) \) for some coset \( Nh_{i_j} \in \mathcal{O}_j \).  

By the orbit-stabilizer theorem, for each orbit \( \mathcal{O}_j \):  
\[
|G| = |\mathcal{O}_j| \cdot |\mathrm{Stab}(Nh_{i_j})| = m_j \cdot |\mathrm{Stab}(Nh_{i_j})|.
\]  
The stabilizer \( \mathrm{Stab}(Nh_{i_j}) \) consists of elements \( g \in G \) satisfying \( Nh_{i_j} g = Nh_{i_j} \). This occurs if and only if \( h_{i_j} g \in Nh_{i_j} \), which implies \( g \in h_{i_j}^{-1} N h_{i_j} \). Thus, \( \mathrm{Stab}(Nh_{i_j}) \) is conjugate to \( N \), and hence \( |\mathrm{Stab}(Nh_{i_j})| = |N| \leq n \).  

Then we have 
\[
|G| = \sum_{j=1}^k |\mathcal{O}_j| \cdot |\mathrm{Stab}(Nh_{i_j})| \leq \sum_{j=1}^k m_j \cdot n = n \cdot \sum_{j=1}^k m_j = n \cdot m,
\]  
since the total number of cosets in \( S \) is \( m \). Therefore, \( |G| \leq mn \).  
\end{proof}  
%To verify \( Nh_i g \in S \) computationally: For any representative \( nh_i \in Nh_i \), check whether \( nh_i g \in Nh_j \) for some \( j \). If true, then \( Nh_i g = Nh_j \in S \), which confirms the permutation action.  

Since any two right cosets of $N$ are either identical or disjoint, when verifying whether the coset $Nh_ig$ belongs to the set $S$, 
it suffices to choose an arbitrary element $nh_ig\in Nh_ig$ (where $n \in N$) and check whether $nh_ig\in Nh_j$ for some $j$.
%To check if the coset $Nh_ig$ is in the set $S$, choose any element $nh_ig \in Nh_ig$ (where $n \in N$). If $nh_ig$ belongs to some $Nh_j$ in $S$, then $Nh_ig = Nh_j$. This works because distinct cosets of $N$ are either equal or disjoint.

%%%%%%%%%%%%%%%%%%%%%%
\section{Proof of the main theorem}\label{s:3}
%%%%%%%%%%%%%%%%%%%%%%
%
%%%%%%%%%%%%%%%%%%%%%%%
\subsection{A semidirect product construction}

We begin by stating three key lemmas required for our main construction.

\begin{lemma}
\label{lem1}
    Let $A \in \mathrm{GL}(n-1, \mathbb{F}_2)$ be the circulant matrix
%    \[
%    A = \begin{bmatrix}
%    0 & 1 & 0 & \cdots & 0 \\
%    0 & 0 & 1 & \cdots & 0 \\
%    \vdots & \vdots & \vdots & \raisebox{0.9ex}[0pt][6pt]{$\nddots$}& 1 \\
%    1 & 1 & 1 & \cdots & 1 
%    \end{bmatrix}_{(n-1)\times (n-1)}.
%    \]
        \[
    A = \begin{bmatrix}
    0 & 1 & 0 & \cdots & 0&0 \\
    0 & 0 & 1 & \cdots & 0&0 \\
    \vdots & \vdots & \vdots && \vdots&\vdots \\
      0 & 0 & 0 & \cdots &0& 1 \\
    1 & 1 & 1 & \cdots & 1&1 
    \end{bmatrix}_{(n-1)\times (n-1)}.
    \]
    For $1 \leq k \leq n-1$, its $k$-th power has the block structure
    \[
    A^k = \begin{bmatrix}
    \mathbf{0}_{(n-1-k)\times k} & I_{n-1-k} \\
    \mathbf{1}_{1\times (n-1)} & \\
    I_{k-1} & \mathbf{0}_{(k-1)\times (n-k)}
    \end{bmatrix},
    \]
    where $\mathbf{0}$ denotes zero matrices, $I_m$ identity matrices, and $\mathbf{1}_{1\times (n-1)} $ a row vector of ones.
\end{lemma}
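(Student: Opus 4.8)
The plan is to recognize right multiplication by $A$ as a concrete map on row vectors and to follow the orbits of the standard basis under it. Write $e_1,\dots,e_{n-1}$ for the standard basis row vectors and set $\mathbf{1}=e_1+\cdots+e_{n-1}$. Reading the rows of $A$ off directly yields the three rules $e_iA=e_{i+1}$ for $1\le i\le n-2$, $e_{n-1}A=\mathbf{1}$, and, since we are in characteristic $2$, $\mathbf{1}A=\sum_{i=1}^{n-1}e_iA=(e_2+\cdots+e_{n-1})+\mathbf{1}=e_1$. Informally, right multiplication by $A$ is a ``shift with feedback'': from $e_i$ one moves forward through $e_{i+1},e_{i+2},\dots$ up to $e_{n-1}$, then one step to $\mathbf{1}$, then one step to $e_1$, and then forward again.

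Since row $i$ of $A^k$ equals $e_iA^k$, I would just record where each $e_i$ has travelled after $k$ applications, according to three ranges. If $i+k\le n-1$, then $e_iA^k=e_{i+k}$; if $i+k=n$, then $e_iA^k=\mathbf{1}$; and if $i+k\ge n+1$, then $e_iA^k=e_{i+k-n}$, where one checks $1\le i+k-n\le n-2$ so that the index is legitimate. Reassembling these cases as the rows of $A^k$: the rows with $1\le i\le n-1-k$ are $e_{k+1},\dots,e_{n-1}$, which is the block $[\mathbf{0}_{(n-1-k)\times k}\mid I_{n-1-k}]$; the single row $i=n-k$ is $\mathbf{1}=\mathbf{1}_{1\times(n-1)}$; and the rows with $n-k+1\le i\le n-1$ are $e_1,\dots,e_{k-1}$, which is the block $[I_{k-1}\mid \mathbf{0}_{(k-1)\times(n-k)}]$. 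Stacking these three blocks is exactly the asserted form, read with the convention that empty matrices (the top block when $k=n-1$, the bottom block when $k=1$) contribute no rows.

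The content here is purely the range bookkeeping of the second paragraph — matching the boundaries $i+k\le n-1$, $i+k=n$, $i+k\ge n+1$ to the three blocks and confirming the block dimensions total $n-1$ — and that is the step I expect would be the likeliest place to slip, rather than a conceptual difficulty. As an aside, the same rules give $\mathbf{1}A^j=e_j$ for $1\le j\le n-1$, hence $e_iA^n=e_i$ for all $i$ and therefore $A^n=I_{n-1}$; equivalently, $A$ is the companion matrix of $x^{n-1}+x^{n-2}+\cdots+x+1$, whose constant term is $1$, which reconfirms $A\in\mathrm{GL}(n-1,\mathbb{F}_2)$. An alternative route to the same conclusion is a short induction on $k$, verifying $k=1$ directly and passing from $A^k$ to $A^{k+1}=A^kA$ by applying the same three multiplication rules row by row.
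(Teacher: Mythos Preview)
Your proof is correct and takes a genuinely different route from the paper's. The paper argues by induction on $k$: it records $A^1=A$ in the stated block form and then multiplies the block form of $A^{k-1}$ by $A$ to obtain that of $A^k$. You instead identify the action of right multiplication by $A$ on row vectors as the $n$-cycle $e_1\to e_2\to\cdots\to e_{n-1}\to\mathbf{1}\to e_1$ on the set $\{e_1,\dots,e_{n-1},\mathbf{1}\}$, and then read off $e_iA^k$ for all $i$ in one stroke by locating $i+k$ relative to $n$. This is more conceptual: it explains the block pattern rather than just verifying it, and it immediately yields the byproducts $A^n=I_{n-1}$ and the companion-matrix description, which the paper only establishes later. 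The paper's induction, on the other hand, is a purely mechanical block-matrix multiplication that avoids introducing the auxiliary vector $\mathbf{1}$ and the cycle picture. Your closing remark that the same result follows by induction via $A^{k+1}=A^kA$ is exactly the paper's argument.
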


\begin{proof}
    The base case $k=1$ is immediate. For $k > 1$, we compute inductively. Assume the conclusion is true for $ k-1$, then 
    \begin{align*}
    A^k &= A^{k-1}A \\
    &= \begin{bmatrix}
    \mathbf{0}_{(n-k)\times (k-1)} & I_{n-k} \\
    \mathbf{1}_{1\times (n-1)} & \\
    I_{k-2} & \mathbf{0}_{(k-2)\times (n-k+1)}
    \end{bmatrix}
    \begin{bmatrix}
    0 & 1 & \cdots & 0 \\
    \vdots & \vdots & \raisebox{0.9ex}[0pt][5pt]{$\nddots$} & 1 \\
    1 & 1 & \cdots & 1
    \end{bmatrix}_{(n-1)\times (n-1)} \\
    &= \begin{bmatrix}
    \mathbf{0}_{(n-1-k)\times k} & I_{n-1-k} \\
    \mathbf{1}_{1\times (n-1)} & \\
    I_{k-1} & \mathbf{0}_{(k-1)\times (n-k)}
    \end{bmatrix}. \qedhere
    \end{align*}
\end{proof}

\begin{lemma}
\label{lem2}
    Let $A$ be as in Lemma~\ref{lem1} and $\mathbf{v} = (1,0,\dotsc,0)^\top \in \mathbb{F}_2^{n-1}$. Then the set
    $\mathcal{B} = \left\{\mathbf{v},\, \mathbf{v}+A\mathbf{v},\, \dotsc,\, \mathbf{v}+A\mathbf{v}+\dotsb+A^{n-2}\mathbf{v}\right\}$
    forms a basis for $\mathbb{F}_2^{n-1}$. 
\end{lemma}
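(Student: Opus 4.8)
The plan is to make everything explicit using Lemma~\ref{lem1}. First I would observe that $A^k\mathbf{v}$ is simply the first column of the matrix $A^k$, which can be read off from the block form in Lemma~\ref{lem1}: for $1\le k\le n-2$ the top $\mathbf{0}$-block forces zeros in rows $1,\dots,n-1-k$, the all-ones row contributes a $1$ in row $n-k$, and the first column of the $I_{k-1}$-block contributes a $1$ in row $n-k+1$ (which is vacuous when $k=1$). Writing $\mathbf{e}_1,\dots,\mathbf{e}_{n-1}$ for the standard basis of $\mathbb{F}_2^{n-1}$, this gives $A\mathbf{v}=\mathbf{e}_{n-1}$ and $A^k\mathbf{v}=\mathbf{e}_{n-k}+\mathbf{e}_{n-k+1}$ for $2\le k\le n-2$.

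Next I would compute the partial sums $\mathbf{w}_j:=\sum_{k=0}^{j}A^k\mathbf{v}$ (with $A^0\mathbf{v}=\mathbf{v}=\mathbf{e}_1$), which are precisely the elements of $\mathcal{B}$ for $0\le j\le n-2$. Working in characteristic $2$ and telescoping: $\mathbf{w}_0=\mathbf{e}_1$, $\mathbf{w}_1=\mathbf{e}_1+\mathbf{e}_{n-1}$, and for $2\le j\le n-2$ the recursion $\mathbf{w}_j=\mathbf{w}_{j-1}+\mathbf{e}_{n-j}+\mathbf{e}_{n-j+1}$ combined with the inductive hypothesis $\mathbf{w}_{j-1}=\mathbf{e}_1+\mathbf{e}_{n-j+1}$ makes the two $\mathbf{e}_{n-j+1}$ terms cancel, leaving the closed form $\mathbf{w}_j=\mathbf{e}_1+\mathbf{e}_{n-j}$. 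A one-line induction on $j$ then establishes $\mathbf{w}_j=\mathbf{e}_1+\mathbf{e}_{n-j}$ for all $1\le j\le n-2$.

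It follows that $\mathcal{B}=\{\mathbf{e}_1\}\cup\{\mathbf{e}_1+\mathbf{e}_j:2\le j\le n-1\}$. Adding $\mathbf{e}_1$ to each of the $n-2$ vectors of the second type recovers $\mathbf{e}_2,\dots,\mathbf{e}_{n-1}$, so $\mathcal{B}$ spans $\mathbb{F}_2^{n-1}$; since $\mathcal{B}$ consists of $n-1=\dim_{\mathbb{F}_2}\mathbb{F}_2^{n-1}$ vectors, it is a basis. Equivalently, writing $\mathbf{w}_0,\dots,\mathbf{w}_{n-2}$ as the columns of an $(n-1)\times(n-1)$ matrix and expanding along the column $\mathbf{w}_0=\mathbf{e}_1$, one is left with an anti-identity minor, so the matrix is invertible over $\mathbb{F}_2$.

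The argument is entirely elementary; the only thing requiring care is the index bookkeeping in Lemma~\ref{lem1}, in particular the degenerate case $k=1$, where the $I_{k-1}$-block is empty so that there is no ``wrap-around'' vector $\mathbf{e}_n$, and the fact that $j$ only ranges up to $n-2$, so every invocation of Lemma~\ref{lem1} stays inside its stated range $1\le k\le n-1$. I expect this indexing, rather than any conceptual point, to be the main (and only mild) obstacle.
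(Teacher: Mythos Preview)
Your proof is correct and close in spirit to the paper's, but it takes a slightly different path. The paper first observes that $\mathcal{B}$ is linearly equivalent to the set $\{\mathbf{v},A\mathbf{v},\dots,A^{n-2}\mathbf{v}\}$ (an invertible triangular change of coordinates), writes these vectors as the columns of a matrix $B$, and reads off $\det B\equiv 1\pmod 2$ from the anti-diagonal pattern. You instead compute the partial sums $\mathbf{w}_j$ directly, obtaining the closed form $\mathbf{w}_j=\mathbf{e}_1+\mathbf{e}_{n-j}$ via a telescoping induction, and then check that $\{\mathbf{e}_1\}\cup\{\mathbf{e}_1+\mathbf{e}_j:2\le j\le n-1\}$ is visibly a basis. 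Your route costs one extra (easy) induction but yields the explicit shape of the basis vectors themselves, which is a small bonus; the paper's reduction to $\{A^k\mathbf{v}\}$ is marginally quicker since it avoids tracking the cumulative sums. Both arguments rest on the same computation of $A^k\mathbf{v}$ from Lemma~\ref{lem1}, and your handling of the edge cases ($k=1$ and the range $k\le n-2$) is correct.
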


\begin{proof}
    Observe that $\mathcal{B}$ is linearly equivalent to $\{\mathbf{v}, A\mathbf{v}, \dotsc, A^{n-2}\mathbf{v}\}$. Let
    \[
    B =(\mathbf{v},A\mathbf{v},\cdots,A^{n-2}\mathbf{v})= \begin{bmatrix}
    1 & 0 & 0 & \cdots & 0 \\
    0 & 0 & 0 & \cdots & 1 \\
    0 & 0 & 0 & \rddots & 1 \\
    \vdots & \vdots & \rddots & \rddots & \vdots \\
    0 & 1 & 1 & \cdots & 0
    \end{bmatrix}
    \]
    be the matrix whose columns are these vectors. The anti-diagonal structure (shown via inverted diagonal dots $\rddots$) ensures $\det B \equiv 1$ $\rm {(mod 2)}$, hence $B$ is invertible over $\mathbb{F}_2$, which guarantees that  $\mathcal{B}$ is a basis for $ \mathbb{F}_2^{n-1} $.
\end{proof}

\begin{lemma}
\label{lem3}
    The matrices
    \[
    U = \begin{bmatrix}
    0 & \cdots & 1& 0 \\
    \vdots & \rddots &  \raisebox{0.5ex}[0pt][0pt]{$\rddots$} & \vdots \\
    1 & 0 & \cdots & 0 \\
    1 & 1 & \cdots & 1 
    \end{bmatrix}, \quad
    V = \begin{bmatrix}
    0 & \cdots & 0 & 1 \\
    \vdots & \rddots & 1 & 0 \\
    0 & \raisebox{0.5ex}[0pt][0pt]{$\rddots$} & \rddots & \vdots \\
    1 & 0 & \cdots & 0 
    \end{bmatrix}
    \]
    generate a dihedral group $D_{2n}$ in $\mathrm{GL}(n-1, \mathbb{F}_2)$.
\end{lemma}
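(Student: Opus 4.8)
The plan is to verify directly that $U$ and $V$ are involutions, to recognize the product $UV$ as the circulant matrix $A$ of Lemma~\ref{lem1}, and then to read off the order of $UV$ from that lemma; the dihedral structure will follow from the standard presentation of $D_{2n}$.

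First I would record that $V$ is simply the $(n-1)\times(n-1)$ reversal (anti-identity) matrix, so $V^{2}=I_{n-1}$ is immediate. Next I would compute $U^{2}$ in block form, writing
\[
U=\begin{bmatrix} J_{n-2} & \mathbf{0} \\ \mathbf{1} & 1\end{bmatrix},
\]
where $J_{n-2}$ is the $(n-2)\times(n-2)$ reversal matrix. Every block of $U^{2}$ is then visibly trivial except the bottom-left one, which equals $\mathbf{1}\,J_{n-2}+\mathbf{1}=\mathbf{1}+\mathbf{1}=\mathbf{0}$ over $\mathbb{F}_2$, since the reversal of the all-ones row is again the all-ones row; hence $U^{2}=I_{n-1}$. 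In particular $U,V\in\mathrm{GL}(n-1,\mathbb{F}_2)$.

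The key identification is $UV=A$. Right multiplication of $U$ by $V$ reverses the columns of $U$, and a short column-by-column check shows the resulting matrix has first column $e_{n-1}$ and $j$-th column $e_{j-1}+e_{n-1}$ for $2\le j\le n-1$, which is exactly the matrix $A$ of Lemma~\ref{lem1}. Applying Lemma~\ref{lem1} with $k=n-1$ gives the explicit form of $A^{n-1}$, and one further multiplication $A\cdot A^{n-1}$ (using $Ae_{1}=e_{n-1}$ and $Ae_{j}=e_{j-1}+e_{n-1}$) collapses to $I_{n-1}$, so $(UV)^{n}=I_{n-1}$. Moreover, for every $1\le k\le n-1$ the matrix $A^{k}$ given by Lemma~\ref{lem1} contains a full row of ones of length $n-1\ge 2$, hence $A^{k}\neq I_{n-1}$; therefore $UV$ has order exactly $n$.

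Finally, from $U^{2}=V^{2}=I_{n-1}$ one gets $V(UV)V=(VU)V^{2}=VU=(UV)^{-1}$, so with $W:=UV$ the subgroup $\langle U,V\rangle=\langle W,V\rangle$ is a homomorphic image of $D_{2n}=\langle W,V\mid W^{n}=V^{2}=1,\ VWV=W^{-1}\rangle$. Since $\langle W\rangle$ already has order $n$, it only remains to exclude $\langle U,V\rangle=\langle W\rangle$: if $V\in\langle W\rangle$ then $V$ commutes with $W$, so $W=VWV=W^{-1}$, i.e. $W^{2}=1$, impossible for $n\ge 3$. Hence $V\notin\langle W\rangle$, the $2n$ elements $W^{i},\,W^{i}V$ ($0\le i<n$) are distinct, and $\langle U,V\rangle\cong D_{2n}$. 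The only steps requiring genuine care are the bookkeeping in the identification $UV=A$ and the multiplication $A\cdot A^{n-1}=I_{n-1}$; everything else is formal.
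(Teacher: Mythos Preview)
Your proof is correct and follows essentially the same route as the paper: verify $U^{2}=V^{2}=I$, identify $UV$ with the matrix $A$ of Lemma~\ref{lem1}, and use that lemma to pin down the order of $UV$ as exactly $n$. The only noteworthy difference is that you spell out why satisfying the relations $U^{2}=V^{2}=(UV)^{n}=I$ forces $\langle U,V\rangle$ to have order exactly $2n$ (via the $V\notin\langle UV\rangle$ argument), whereas the paper simply asserts the resulting presentation is that of $D_{2n}$; your version is the more careful one, but the underlying argument is the same.
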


\begin{proof}
    Direct verification shows $U^2 = V^2 = I$. Since $UV = A$, from Lemma~\ref{lem1}, we have $(UV)^k \neq I$ for $1 \leq k \leq n-1$, and 
        \[
    A^{n-1} = \begin{bmatrix}
    \mathbf{1}_{1\times (n-1)} & \\
    I_{n-2} & \mathbf{0}_{(n-2)\times 1}
    \end{bmatrix}.
    \]
    
While
\begin{align*}
(UV)^n = A^n &= A^{n-1}A \\
&= \begin{bmatrix}
1 & \cdots & 1 & 1 \\
1 & 0 & \cdots & 0 \\
\vdots & \ddots & \vdots & \vdots \\
0 & \cdots & 1 & 0 
\end{bmatrix}
\begin{bmatrix}
0 & 1 & 0 & \cdots & 0 \\
0 & 0 & 1 & \cdots & 0 \\
\vdots & \vdots & \vdots & \raisebox{0.9ex}[0pt][6pt]{$\nddots$} & 1 \\
1 & 1 & 1 & \cdots & 1 
\end{bmatrix} 
= \begin{bmatrix}
1 & 0 & \cdots & 0 \\
0 & 1 & \cdots & 0 \\
\vdots & \vdots & \ddots & \vdots \\
0 & 0 & \cdots & 1 
\end{bmatrix} = I_{n-1},
\end{align*}
the group $\langle U, V \rangle$ therefore has presentation
    \[
    \langle U, V \mid U^2 = V^2 = (UV)^n = I \rangle \cong D_{2n}. \qedhere
    \]
\end{proof}
\medskip

%Let \[D_{2n}=\left \langle h_0,h_1\;|\;(h_0h_1)^n=h_0^2=h_1^2=e \right \rangle \] be the dihedral group of order $2n$ and define the following map:
%\begin{align}
%\varphi : D_{2n} &\longrightarrow \mathrm{Aut}(\mathbb{Z}_2^{n-1})=\mathrm{GL}(n-1,2)\notag \\
%h_0&\longmapsto U\notag \\
%h_1&\longmapsto V\notag  
%\end{align}
%According to Lemma \ref{lem3} $\varphi$ is an isomorphism and can be used as the associated homomorphism to define $\mathbb{Z}_2^{n-1}\rtimes_\varphi D_{2n}$. The group operation for elements in the semidirect product is given by:
%\[
%(\vec{a},x)\cdot(\vec{b},y)=(\vec{a}+\varphi(x)^{-1}\vec{b},xy),\;\;\vec{a},\vec{b}\in\mathbb{Z}_2^{n-1},\;x,y\in D_{2n}.
%\]
%We denote $\mathbb{Z}_2^{n-1}\rtimes_\varphi D_{2n}$ as $G$ and define
%\begin{align}
%\rho_0&=\left ( (0,0,\cdots, 0)^T,\,h_0 \right ), \notag \\
%\rho_1&=\left ( (0,0,\cdots, 0)^T,\,h_1 \right ),\notag \\
%\rho_2&=\left ( (0,\cdots,0, 1)^T,\,h_0 \right ).\notag 
%\end{align}
We are now ready for our semi-direct product construction.
Let \( D_{2n} = \langle h_0, h_1 \mid (h_0 h_1)^n = h_0^2 = h_1^2 = e \rangle \) be the dihedral group of order \( 2n \). We define a faithful representation
\[
\varphi : D_{2n} \to \mathrm{Aut}(\mathbb{F}_2^{n-1}) \cong \mathrm{GL}(n-1, \mathbb{F}_2)
\]  
by mapping generators to the matrices \( U \) and \( V \) from Lemma~\ref{lem3} with
$\varphi(h_0) = U$ and $\varphi(h_1) = V$.
By Lemma~\ref{lem3}, this induces an isomorphism \( D_{2n} \cong \mathrm{Im}(\varphi) \), embedding the dihedral group into the automorphism group of \( \mathbb{F}_2^{n-1} \). Further, the semidirect product 
\[
G = \mathbb{F}_2^{n-1} \rtimes_\varphi D_{2n}
\] 
has group operation
\[
(\mathbf{a}, x) \cdot (\mathbf{b}, y) = \left( \mathbf{a} + \varphi(x)^{-1}\mathbf{b},\, xy \right), \quad \forall \mathbf{a}, \mathbf{b} \in \mathbb{F}_2^{n-1},\ x, y \in D_{2n}.
\]

Within this framework, we define three canonical elements:  
\[
\begin{aligned}
\rho_0 &= \big( \mathbf{0},\, h_0 \big), \\
\rho_1 &= \big( \mathbf{0},\, h_1 \big), \\
\rho_2 &= \big( (0,\dots,0,1)^\top,\, h_0 \big),
\end{aligned}
\]  
where \( \mathbf{0} \) denotes the zero vector in \( \mathbb{F}_2^{n-1} \). These elements will serve as the generators for our string C-group structure.

%%%%%%%%%%%%%%%%%%%%%%
\subsection{String C-group proof}
%%%%%%%%%%%%%%%%%%%%%%

%We will show that $(G,\{\rho_0,\rho_1,\rho_2\})$ is a string C-group of type $[n,n]$, which therefore corresponds to a regular 3-polytope of Schl\"{a}fli type $ \{n, n\} $.
%
%Firstly, we verify the order of the related elements. Since the first components of both $\rho_0$ and $\rho_1$ are zero vectors, the first component does not participate in the operation when computing the multiplication of $\rho_0,\rho_1$. Obviously we have $o(\rho_0\rho_1)=n,o(\rho_0)=o(\rho_1)=2$ in $G$. Because
We demonstrate that \((G, \{\rho_0, \rho_1, \rho_2\})\) is a string C-group of type \([n, n]\), corresponding to a regular 3-polytope with Schl\"{a}fli type \(\{n, n\}\). The argument proceeds through three essential verifications: generator orders, group generation, and the intersection condition. 

First, we confirm the orders of the generators. For \(\rho_0\) and \(\rho_1\), their zero vector components remain invariant under group operation, thus their orders directly inherit from the dihedral group structure with \(o(\rho_0) = o(\rho_1) = 2\) and \(o(\rho_0\rho_1) = n\). 
We let \(\mathbf{u} = (0,\dotsc,0,1)^\top\), then the order of \(\rho_2\) follows from the computation
$\rho_2^2=\left ( \mathbf{u}+U\mathbf{u},\,h_0^2\right ) =\left ( \mathbf{0},\,e \right )$, 
where \(U\) denotes the matrix from Lemma~\ref{lem3}. This establishes \(o(\rho_2) = 2\). The commutativity \(\rho_0\rho_2 = \rho_2\rho_0\) arises from the calculation
$\rho_0\rho_2=\left (\mathbf{0}+U \mathbf{u},\, h_0^2\right ) =\left (  \mathbf{u},\, e \right ) \notag$, 
confirming \(o(\rho_0\rho_2) = 2\). 

It remains to consider the critical case of \(\rho_1\rho_2\). 
We recall $\varphi(h_1h_0)^{-1}=\varphi(h_0h_1)=UV=A$.
The recursive structure of \( (\rho_1\rho_2)^k \) emerges as
\[
(\rho_1\rho_2)^k = \left( \sum_{i=0}^{k-1} A^i\mathbf{v},\, (h_1h_0)^k \right), \quad 1 \leq k \leq n,
\]
due to the fact that $\rho_1\rho_2=\left (  \mathbf{0}+V \mathbf{u},\, h_1h_0\right ) =\left ( \mathbf{v},\, h_1h_0  \right ) $ and
\begin{align}
( \rho_1\rho_2)^k&=(\rho_1\rho_2)\cdot(\rho_1\rho_2)^{k-1}\notag \\
&=(\mathbf{v},\, h_1h_0 )\cdot \left ( \mathbf{v}+A\mathbf{v}+\cdots + A^{k-2}\mathbf{v},\, (h_1h_0)^{k-1}  \right )\notag \\
&=\left ( \mathbf{v}+A(\mathbf{v}+A\mathbf{v}+\cdots + A^{k-2}\mathbf{v}),\, (h_1h_0)^k \right )  \notag \\
&=\left ( \mathbf{v}+A\mathbf{v}+\cdots + A^{k-1}\mathbf{v},\, (h_1h_0)^k \right ) \notag.
%&\vdots \notag \\
%&(\rho_1\rho_2)^n=\left ( \mathbf{v}+A\mathbf{v}+\cdots + A^{n-1}\mathbf{v},e \right )\notag 
\end{align}
%According to the definition,
%\[
%\rho_1\rho_2=\left (  \mathbf{0}+V \mathbf{u},\, h_1h_0\right ) =\left ( \mathbf{v},\, h_1h_0  \right ) ,
%\]
%since $\varphi(h_1h_0)^{-1}=\varphi(h_0h_1)=UV=A$, we can calculate $(\rho_1\rho_2)^k$ step by step:
%\begin{align}
%ll
%&=\left(\mathbf{v}+A\mathbf{v}, (h_1h_0)^2 \right )\notag \\
%\cdots \notag \\
%( \rho_1\rho_2)^k&=(\rho_1\rho_2)\cdot(\rho_1\rho_2)^{k-1}\notag \\
%&=(\mathbf{v},h_1h_0 )\cdot \left ( \mathbf{v}+A\mathbf{v}+\cdots + A^{k-2}\mathbf{v},(h_1h_0)^{k-1}  \right )\notag \\
%&=\left ( \mathbf{v}+A(\mathbf{v}+A\mathbf{v}+\cdots + A^{k-2}\mathbf{v}),(h_1h_0)^k \right )  \notag \\
%&=\left ( \mathbf{v}+A\mathbf{v}+\cdots + A^{k-1}\mathbf{v},(h_1h_0)^k \right ) \notag\\
%\cdots \notag \\
%(\rho_1\rho_2)^n&=\left ( \mathbf{v}+A\mathbf{v}+\cdots + A^{n-1}\mathbf{v},e \right )\notag 
%\end{align}
%When $k<n$, the second component of $(\rho_1\rho_2)^k$ is $(h_1h_0)^k$, which is not equal to $e$, namely $(\rho_1\rho_2)^k$ is not the identity element of $G$. When $k=n$, note that the characteristic polynomial of $A$ is
%\[
%\mathrm{det}(\lambda I-A)=\begin{vmatrix}
% \lambda  & 1 &  & \\
%  &  \lambda &  \ddots & \\
%   &  & \ddots  & 1\\
%1  & 1 & \cdots &\lambda +1
%\end{vmatrix}=\lambda^{n-1}+\cdots +\lambda +1\in \mathbb{F}_2[\lambda].
%\]
%By Hamilton-Cayley theorem we have $I+A+\cdots+A^{n-1}=\mathbf{0}$, so that $ \mathbf{v}+A\mathbf{v}+\cdots + A^{n-1}\mathbf{v}=\vec{\mathbf{0}}$. Finally, we prove that $(\rho_1\rho_2)^n=(\vec{\mathbf{0}},e)$ and $o(\rho_1\rho_2)=n$.
For \( k < n \), the non-trivial group component \( (h_1h_0)^k \neq e \) prevents identity. At \( k = n \), the characteristic polynomial
%\[
%\det(\lambda I - A) = \begin{vmatrix}
%\lambda & -1 & & \\
%& \lambda & \ddots & \\
%& & \ddots & -1 \\
%-1 & -1 & \cdots & \lambda-1
%\end{vmatrix} = \lambda^{n-1} + \cdots + \lambda + 1 \in \mathbb{F}_2[\lambda]
%\]
\[
\det(\lambda I - A) = \begin{vmatrix}
\lambda & 1 & 0 & \cdots & 0 \\
0 & \lambda & 1 & \ddots & \vdots \\
\vdots & \ddots & \ddots & \ddots & 0 \\
0 & \cdots & 0 & \lambda & 1 \\
1 & 1 & \cdots & 1 & \lambda+1
\end{vmatrix}= \lambda^{n-1} + \cdots + \lambda + 1 \in \mathbb{F}_2[\lambda]
\]
induces \( \sum_{i=0}^{n-1} A^i = \mathbf{0} \) via the Hamilton-Cayley theorem. Consequently,
 \[
    (\rho_1\rho_2)^n = \left( \sum_{i=0}^{n-1}A^i\mathbf{v},\ e \right) = (\mathbf{0},\ e). 
 \]

%Next, we verify $G=\left \langle \rho_0,\rho_1,\rho_2  \right \rangle $, which means the elements specified indeed form a generating set for $G$. Since $( \rho_1\rho_2)^k=\left ( \mathbf{v}+A\mathbf{v}+\cdots + A^{k-1}\mathbf{v},(h_1h_0)^k \right )$ we have
%\begin{align}
%(\rho_1\rho_2)^k(\rho_0\rho_1)^k&=\left ( (I+A+\cdots + A^{k-1})\mathbf{v},(h_1h_0)^k \right )\cdot \left ( \vec{\mathbf{0}},(h_0h_1)^k \right ) \notag \\
%&=\left ((I+A+\cdots + A^{k-1})\mathbf{v}+A^k\vec{\mathbf{0} },(h_1h_0)^k(h_0h_1)^k   \right )  \notag \\
%&=\left ( (I+A+\cdots + A^{k-1})\mathbf{v},e \right )\notag 
%\end{align}
%when $1\le k\le n-1$. According to Lemma \ref{lem2}, $(\rho_1\rho_2)^k(\rho_0\rho_1)^k$ generate the subgroup $\mathbb{Z}_2^{n-1}$ of $G$. Meanwhile, $\rho_0,\rho_1$ generate the subgroup $D_{2n}$ of $G$. Thus $G =\left \langle \rho_0,\rho_1,\rho_2  \right \rangle $ and $(G,\{\rho_0,\rho_1,\rho_2\})$ forms a $sggi$. 
Next, we verify $G=\left \langle \rho_0,\rho_1,\rho_2  \right \rangle $, which means the elements specified indeed form a generating set for $G$. For \( 1 \leq k \leq n-1 \), consider the composite elements
\[
(\rho_1\rho_2)^k(\rho_0\rho_1)^k = \left( \sum_{i=0}^{k-1} A^i\mathbf{v},\ e \right),
\]
where the vanishing second component results from \((h_1h_0)^k(h_0h_1)^k = e\). By Lemma~\ref{lem2}, these elements generate the entire vector space \(\mathbb{F}_2^{n-1}\) through their first components. Moreover, it is clear that \(\langle \rho_0, \rho_1 \rangle\) faithfully reproduces \(D_{2n}\) via pure group elements. The semidirect product structure therefore guarantees \( G = \langle \rho_0, \rho_1, \rho_2 \rangle \).

%In the final step, the intersection condition can be checked directly. Enumerating elements in $\left \langle \rho_0,\rho_1\right \rangle $ and $\left \langle \rho_1,\rho_2\right \rangle $ explicitly as follows:
%\begin{align}
%\left \langle \rho_0,\rho_1\right \rangle = \{&(\vec{\mathbf{0}},e),\rho_0\rho_1,\cdots ,(\rho_0\rho_1)^{n-1},\notag\\
%&\rho_0,\rho_1,\cdots ,(\rho_0\rho_1)^{n-2}\rho_0\}\notag \\
%\left \langle \rho_1,\rho_2\right \rangle = \{&(\vec{\mathbf{0}},e),\rho_1\rho_2,\cdots ,(\rho_1\rho_2)^{n-1},\notag\\
%&\rho_1,\rho_2,\cdots ,(\rho_1\rho_2)^{n-2}\rho_1\}\notag 
%\end{align}
%we can observe that the first components of elements in $\left \langle \rho_0,\rho_1\right \rangle$ are $\vec{\mathbf{0}}$. However the first components of $(\rho_1\rho_2)^k$ and $(\rho_1\rho_2)^k\rho_1$ are $(I+A+\cdots + A^{k-1})\mathbf{v}$ which are not equal to $\vec{\mathbf{0}}$ for $1\le k\le n-1$ (note that $(\rho_0\rho_1)^{n-1}\rho_0=\rho_1$ and $(\rho_1\rho_2)^{n-1}\rho_1=\rho_2$), which indicates that $\left \langle \rho_0,\rho_1\right \rangle\bigcap\left \langle \rho_1,\rho_2\right \rangle=\left \langle \rho_1\right \rangle$ and intersection condition holds. This completes the proof and shows that the pair $(G,\{\rho_0,\rho_1,\rho_2\})$ is indeed a string C-group.
Clearly, both of $\langle \rho_0 \rangle \cap \langle \rho_1\rangle$ and $\langle \rho_1 \rangle \cap \langle \rho_2\rangle$ are trivial. To complete the string C-group verification, it suffices to establish the intersection condition $\langle \rho_0, \rho_1 \rangle \cap \langle \rho_1, \rho_2 \rangle = \langle \rho_1 \rangle$. The subgroup $\langle \rho_0, \rho_1 \rangle$ consists entirely of elements with trivial vector components $(\mathbf{0}, g)$, where $g$ ranges over $D_{2n}$. Specifically, these elements are rotational operations $(\rho_0\rho_1)^k = (\mathbf{0}, (h_0h_1)^k)$ and their cosets $(\rho_0\rho_1)^k\rho_0 = (\mathbf{0}, (h_0h_1)^kh_0)$ for $0 \leq k \leq n-1$, maintaining strict zero vector preservation under the semidirect action. 

The subgroup \(\langle \rho_1, \rho_2 \rangle\) exhibits complete algebraic closure with elements spanning four categories: 
rotational elements \((\rho_1\rho_2)^k = \left( \sum_{i=0}^{k-1}A^i\mathbf{v}, (h_1h_0)^k \right)\) and
reflective elements \((\rho_1\rho_2)^k\rho_1 = \left( \sum_{i=0}^{k-1}A^i\mathbf{v}, (h_1h_0)^kh_1 \right)\) for \(1 \leq k \leq n-1\),
the identity element $(\mathbf{0}, e) = (\rho_1\rho_2)^0$ and
the generator $\rho_1 = (\mathbf{0}, h_1)=(\rho_1\rho_2)^0\rho_1$.
However, the vectors $\sum_{i=0}^{k-1}A^i\mathbf{v}$ form a basis of $\mathbb{F}_2^{n-1}$ by Lemma~\ref{lem2}, ensuring $\sum_{i=0}^{k-1}A^i\mathbf{v} \neq \mathbf{0}$ when $1 \leq k \leq n-1$. 

%The intersection property is established through component-wise analysis. 
For an element to belong to both $\langle \rho_0, \rho_1 \rangle$ and $\langle \rho_1, \rho_2 \rangle$, it must possess a trivial vector component $\mathbf{0}$ inherited from $\langle \rho_0, \rho_1 \rangle$, and 
have its group component $g \in \langle h_1 \rangle$ to maintain compatibility with $\langle \rho_1, \rho_2 \rangle$. 
This excludes all rotational or reflective elements with $k \geq 1$, leaving only
$\{ (\mathbf{0}, e), (\mathbf{0}, h_1) \} = \langle \rho_1 \rangle.$
Thus we derive $\langle \rho_0, \rho_1 \rangle \cap \langle \rho_1, \rho_2 \rangle = \langle \rho_1 \rangle$, satisfying the string C-group intersection criterion.
\medskip

%Through above discussions, our main theorem is proved. Now, we are able to provide affirmative responses to both questions raised in Section \ref{s:1}.
%\begin{coro}
%    For each odd $m$, there exists a regular 3-polytope of Schl\"{a}fli type $\{m,\,m\}$ with $2^mm$ flags.
%\end{coro}
%\begin{coro}
%    For each prime $p\ge 2$ and integer $s\ge 0$, there exists a regular 3-polytope of Schl\"{a}fli type $\{2^sp,\,2^sp\}$ with $2^{(2^sp+s)}p$ flags.
%\end{coro}
%This completes the constructive proof of Theorem~\ref{thm:main}, and the self-duality of the polytope associated with the string $C$-group $G$ will be proved through its presentation given in the next section. 
%
%\medskip

This completes the constructive proof of Theorem~\ref{thm:main}. To fully characterize \(G\) as the automorphism group of the associated regular polytope, say \(\mathcal{P}\), and subsequently prove the self-duality of $\mathcal{P}$, we will derive an explicit presentation for \(G\) in terms of generators \(\rho_0, \rho_1, \rho_2\) and their defining relations in the consequent subsection. 

%%%%%%%%%%%%%%%%%%%%%%
\subsection{Presentation for \(\mathrm{Aut}(\mathcal{P})\)}\label{s:4}
%%%%%%%%%%%%%%%%%%%%%%

In this subsection we demonstrate the presentation for $G$. Specifically, it suffices to prove the following theorem:
\begin{theorem}
    For any integer $n\ge 3$, the group $(G,\{\rho_0,\rho_1,\rho_2\})$ admits the presentation 
\vspace{-6pt}
   \[
G = \left\langle \rho_0, \rho_1, \rho_2 \,\left|\, 
\begin{array}{l}
\rho_0^2=\rho_1^2=\rho_2^2=(\rho_0\rho_2)^2
=(\rho_0\rho_1)^n=(\rho_1\rho_2)^n=1,\\
\left( (\rho_1\rho_2)^k(\rho_0\rho_1)^k \right)^2=1~\text{for } 1 \leq k \leq n-1
\end{array} \right.\right\rangle.
\] 
%    $$\langle \rho _0,\, \rho_1,\, \rho_2 \,| \, \rho_0^2=\rho_1^2=\rho_2^2=(\rho_0\rho_2)^2=(\rho_0\rho_1)^n=(\rho_1\rho_2)^n=( (\rho_1\rho_2)^k(\rho_0\rho_1)^k )^2=1, 
%    {\rm for}~ 1\le k\le n-1 \rangle,$$ 
%where $\mathcal{R}$ is the set of more defining relators $\{( (\rho_1\rho_2)^k(\rho_0\rho_1)^k )^2:\, 1\le k\le n-1 \}$.
\end{theorem}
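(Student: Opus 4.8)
The plan is to show that the abstract group defined by the presentation---call it $\Gamma$---is isomorphic to $G$. Since $G$ is generated by $\rho_0,\rho_1,\rho_2$ and, as verified in the previous subsection, these satisfy all the listed relations, there is a canonical surjection $\pi:\Gamma\twoheadrightarrow G$. It therefore suffices to prove the reverse inequality $|\Gamma|\le|G|=2^n n$, and then $\pi$ is an isomorphism.

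To bound $|\Gamma|$, I would apply Proposition~\ref{prop:upperbound} with the subgroup $N=\langle \rho_0,\rho_1\rangle\le\Gamma$. From the relations $\rho_0^2=\rho_1^2=(\rho_0\rho_1)^n=1$ we get $|N|\le 2n$, so we may take $n\mapsto 2n$ in the proposition. Then I would exhibit a set $S=\{N, Nh_1, Nh_2,\dots\}$ of at most $2^{n-1}$ right cosets of $N$ that is closed under right multiplication by each of $\rho_0,\rho_1,\rho_2$; this forces $|\Gamma|\le 2^{n-1}\cdot 2n=2^n n$. The natural candidates for coset representatives are the $2^{n-1}$ products of the ``translation'' elements $t_k:=(\rho_1\rho_2)^k(\rho_0\rho_1)^k$ for $1\le k\le n-1$: in $G$ these map to $(\sum_{i=0}^{k-1}A^i\mathbf v,\,e)$, and by Lemma~\ref{lem2} the vectors $\sum_{i=0}^{k-1}A^i\mathbf v$ form a basis of $\mathbb F_2^{n-1}$, so the subgroup they generate in $G$ is exactly the normal factor $\mathbb F_2^{n-1}$, which meets $N$-image trivially and has $2^{n-1}$ elements. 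Abstractly in $\Gamma$ I would let $T=\langle t_1,\dots,t_{n-1}\rangle$ and take $S=\{Nx:x\in T\}$.

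The substantive work is checking closure of $S$ under right multiplication, and this is where the relations $t_k^2=1$ and the commutation relation $(\rho_0\rho_2)^2=1$ earn their keep. Concretely I must show: (i) $T$ is an elementary abelian $2$-group of order $\le 2^{n-1}$ (generated by $n-1$ commuting involutions), so $|S|\le 2^{n-1}$; and (ii) for each generator $\rho_j$ and each $x\in T$, the coset $Nx\rho_j$ again has a representative in $T$, i.e. $x\rho_j\in N\cdot T$. For (ii) the key identities are conjugation formulas expressing $\rho_j t_k \rho_j$ (or $\rho_j^{-1}t_k\rho_j$) as a product of $t_i$'s times an element of $N$; these mirror the computation $\varphi(h)\cdot(\text{basis vector})$ in $G$, i.e. the action of $U$, $V$ on the basis $\mathcal B$ of Lemma~\ref{lem2}. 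Since $\rho_0,\rho_1\in N$ normalize nothing a priori in $\Gamma$, one deduces these formulas purely from the presentation: e.g. $\rho_1 t_k\rho_1$ and $\rho_0 t_k \rho_0$ should each land in $N\cdot T$ by manipulating the words $(\rho_1\rho_2)^k(\rho_0\rho_1)^k$ using only $\rho_i^2=1$, the two order-$n$ relations, $(\rho_0\rho_2)^2=1$, and $t_j^2=1$. The main obstacle is organizing these word calculations cleanly---establishing that $N$ normalizes $T$ in $\Gamma$, so that $NT$ is a subgroup of order $\le 2^n n$ containing all generators, hence equals $\Gamma$. Once $N$ normalizes $T$ and $|T|\le 2^{n-1}$, $|N|\le 2n$, we are done without even invoking Proposition~\ref{prop:upperbound} directly, though that proposition gives an equivalent packaging.

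I would close by assembling the pieces: $|\Gamma|=|NT|\le |N|\,|T|\le 2n\cdot 2^{n-1}=2^n n=|G|$, combined with the surjection $\pi:\Gamma\to G$, yields $|\Gamma|=|G|$ and $\pi$ an isomorphism, proving the presentation is exact. As a remark I would note that this presentation, together with Proposition~\ref{prop:selfdual} and the automorphism $\rho_0\leftrightarrow\rho_2$, $\rho_1\mapsto\rho_1$---which visibly preserves the relation set since swapping $0$ and $2$ exchanges $(\rho_0\rho_1)^n$ with $(\rho_1\rho_2)^n$, fixes $(\rho_0\rho_2)^2$, and sends $\left((\rho_1\rho_2)^k(\rho_0\rho_1)^k\right)^2$ to a conjugate of its inverse---establishes the self-duality of $\mathcal P$ asserted in Theorem~\ref{thm:main}.
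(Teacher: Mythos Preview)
Your plan is correct and essentially coincides with the paper's proof: both bound the order of the abstractly presented group (the paper calls it $H$) by applying Proposition~\ref{prop:upperbound} to the product of the dihedral subgroup $\langle\rho_0,\rho_1\rangle$ and $T=\langle t_1,\dots,t_{n-1}\rangle$, after first establishing that the $t_k$ are commuting involutions. The paper dispatches the commutativity in one line via the identity $t_it_j=(\rho_1\rho_0)^i(\rho_1\rho_2)^{j-i}(\rho_0\rho_1)^j$ (from $t_i=t_i^{-1}$) together with the relation $t_{|j-i|}^{\,2}=1$; this is exactly what you need for your step~(i), so you should record it.

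The only real difference is orientation. The paper takes $N=T$ and uses the dihedral $D=\langle\mu_0,\mu_1\rangle$ as the set of coset representatives, which makes closure under right multiplication by $\mu_0,\mu_1\in D$ automatic and leaves only the short check that $N(\mu_0\mu_1)^k\mu_2=N(\mu_0\mu_1)^k\mu_0$. Your swapped choice instead requires showing that $\langle\rho_0,\rho_1\rangle$ normalizes $T$ in $\Gamma$; this does follow from the same product formula (one finds $\rho_1 t_k\rho_1=t_{n-k}$ and $(\rho_0\rho_1)\,t_k\,(\rho_1\rho_0)=t_{n-1}t_{k-1}$, so both generators of your $N$ normalize $T$), but it is precisely the ``main obstacle'' you flagged and is a little longer to write out than the paper's check. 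Either orientation closes the argument.
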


\begin{proof}
For any integer \( n \ge 3 \), we establish the presentation of \( G \) by analyzing the structure of a related quotient group. Let \( H \) be the quotient of the Coxeter group \( [n, n] \) generated by \( \mu_0, \mu_1, \mu_2 \) with standard Coxeter relations:
\[
\mu_0^2 = \mu_1^2 = \mu_2^2 = (\mu_0\mu_2)^2 = (\mu_0\mu_1)^n = (\mu_1\mu_2)^n = 1,
\]
adjoined with additional relations \( \left( (\mu_1\mu_2)^k(\mu_0\mu_1)^k \right)^2 = 1 \) for \( 1 \leq k \leq n-1 \). Since the generators \( \rho_0, \rho_1, \rho_2 \) of \( G \) satisfy these relations, there exists an epimorphism \( \pi: H \to G \) mapping \( \mu_i \mapsto \rho_i \). The orders of elements in \( H \) are preserved:
\[
o(\mu_0) = o(\mu_1) = o(\mu_2) = o(\mu_0\mu_2) = 2, \quad o(\mu_0\mu_1) = o(\mu_1\mu_2) = n,
\]
implying \( |H| \geq |G| = 2^n n \).

To bound \( |H| \), consider the subgroup \( N = \left\langle (\mu_1\mu_2)^k(\mu_0\mu_1)^k \mid 1 \leq k \leq n-1 \right\rangle \). Each generator of \( N \) has order 2 due to the added relations, and commutativity follows from 
\[
(\mu_1\mu_2)^i(\mu_0\mu_1)^i \cdot (\mu_1\mu_2)^j(\mu_0\mu_1)^j = (\mu_1\mu_0)^i(\mu_1\mu_2)^{j-i}(\mu_0\mu_1)^j = (\mu_1\mu_0)^j(\mu_1\mu_2)^{i-j}(\mu_0\mu_1)^i.
\]
Thus, \( N \) is elementary abelian of order \( 2^{n-1} \). 

Let \( D = \langle \mu_0\mu_1, \mu_0 \rangle \cong D_{2n} \), consisting elements of the identity $1,\, \mu_0,\, (\mu_0\mu_1)^k$ and $(\mu_0\mu_1)^k\mu_0$ for $1\le k\le n-1$. We then define the cosets set \( S = \{ Nh \mid h \in D \} \). 
Since $\mu_0,\mu_1\in D$, apparently we have 
$\{ Nh\mu \mid h \in D,\, \mu=\mu_0~\rm{or}~\mu_1\}\subseteq S\notag$.
It remains to verify $ \{ Nh\mu_2 \mid h \in D \}\subseteq S$.

Observing $\mu_0\mu_2=\mu_2\mu_1\mu_1\mu_0=(\mu_1\mu_2)^{n-1}(\mu_0\mu_1)^{n-1}\in N$,
one has $N\mu_2=N\mu_0\in S$ and $N\mu_0\mu_2=N\in S$. 
Further, for \( (\mu_1\mu_2)^{n-k}\mu_2 \), we show
\begin{align}
(\mu_1\mu_2)^{n-k}\mu_2&=\left ( (\mu_1\mu_2)^{n-k}(\mu_0\mu_1)^{n-k} \right )(\mu_0\mu_1)^k\mu_2\in N(\mu_0\mu_1)^k\mu_2\notag~~~and\\
(\mu_1\mu_2)^{n-k}\mu_2&
=(\mu_1\mu_2)^{n-k-1}(\mu_0\mu_1)^{n-k-1}(\mu_0\mu_1)^k\mu_0\in N(\mu_0\mu_1)^k\mu_0\notag,
\end{align}
%\[
%\left ( (\mu_1\mu_2)^{n-k}(\mu_0\mu_1)^{n-k} \right )(\mu_0\mu_1)^k\mu_0\mu_2=(\mu_1\mu_2)^{n-k}\mu_0\mu_2\in N(\mu_0\mu_1)^k\mu_0\mu_2 ,
%\]
%\begin{align}
%(\mu_1\mu_2)^{n-k}\mu_0\mu_2&=(\mu_1\mu_2)^{n-k-1}\mu_1\mu_0\notag \\
%&=(\mu_1\mu_0)^{n-k-1}(\mu_2\mu_1)^{n-k-1}(\mu_1\mu_0)^{n-k-1}\mu_1\mu_0\notag \\
%&=(\mu_1\mu_2)^{n-k-1}(\mu_0\mu_1)^{n-k-1}(\mu_0\mu_1)^k\in N(\mu_0\mu_1)^k\notag .
%\end{align}
implying 
\begin{align}
N(\mu_0\mu_1)^k\mu_2&=N(\mu_0\mu_1)^k\mu_0\in S\notag~~~and\\ 
N(\mu_0\mu_1)^k\mu_0\mu_2&=N(\mu_0\mu_1)^k\in S\notag.
\end{align}
Therefore, $ \{ Nh\mu_2 \mid h \in D \}\subseteq S$. By Proposition \ref{prop:upperbound}, $|H|\le |N|\cdot|D|=2^{n-1}\cdot2n=2^nn$.
Since \( |H| \geq |G| = 2^n n \), equality holds, and \( \pi \) induces an isomorphism between$ (H,\{\mu_0,\mu_1,\mu_2\}) $ and $(G,\{\rho_0,\rho_1,\rho_2\}) $. This completes the proof.
\end{proof}

With the presentation, it is obtainable that for each $n\ge 3$, the regular 3-polytope \(\mathcal{P}\) associated with the string C-group \((G, \{\rho_0, \rho_1, \rho_2\})\) is self-dual. In fact, this can be reflected from the symmetry of \(G\)'s presentation, where \(\rho_0\) and \(\rho_2\) are interchangeable generators.
We construct an involutory map \(\delta: G \to G\) defined by \(\delta(\rho_0) = \rho_2\), \(\delta(\rho_1) = \rho_1\), and \(\delta(\rho_2) = \rho_0\), which swaps the roles of \(\rho_0\) and \(\rho_2\) while fixing \(\rho_1\). The map \(\delta\) is an automorphism of $G$, as it preserves all defining relations in the presentation of \(G\): the Coxeter relations \((\rho_0\rho_2)^2 = 1\) and \((\rho_0\rho_1)^n = (\rho_1\rho_2)^n = 1\) are invariant under \(\delta\), while the additional relations \(\left( (\rho_1\rho_2)^k(\rho_0\rho_1)^k \right)^2 = 1\) transform into equivalent identities via inversion. By Proposition~\ref{prop:selfdual}, the existence of such a relation-preserving automorphism \(\delta\) implies that \(\mathcal{P}\) is isomorphic to its dual \(\mathcal{P}^*\). 
\medskip

Now we have completed the proof of Theorem~\ref{thm:main}, establishing the existence of the designed string C-group structure. As direct consequences of this result, we provide affirmative answers to the existence questions for certain Schläfli types posed in Section~\ref{s:1}:

\begin{coro}
\label{cor:odd}
For every odd integer $m \geq 3$, there exists a self-dual regular $3$-polytope with Schl\"{a}fli type $\{m, m\}$ having exactly $2^{m}m$ flags.
\end{coro}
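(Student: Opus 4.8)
The plan is to obtain Corollary~\ref{cor:odd} as an immediate specialization of Theorem~\ref{thm:main}. Since Theorem~\ref{thm:main} already asserts, for \emph{every} integer $n\ge 3$ (odd or not), the existence of a self-dual regular $3$-polytope $\mathcal{P}$ of Schl\"{a}fli type $\{n,n\}$ with exactly $2^n n$ flags, it suffices to restrict attention to the case where $n=m$ is odd. First I would note that every odd integer $m\ge 3$ lies in the range $n\ge 3$ covered by Theorem~\ref{thm:main}, so the theorem directly supplies a self-dual regular $3$-polytope of type $\{m,m\}$. Then I would read off the flag count: the string C-group $(G,\{\rho_0,\rho_1,\rho_2\})$ constructed in Section~\ref{s:3} has order $|G|=2^n n$, and since $\mathcal{P}$ is regular, $\mathrm{Aut}(\mathcal{P})$ acts simply transitively on flags, whence the number of flags equals $|G|=2^m m$.

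Concretely the argument runs as follows. Fix an odd integer $m\ge 3$ and apply Theorem~\ref{thm:main} with $n=m$ to get a self-dual regular $3$-polytope $\mathcal{P}$ of type $\{m,m\}$ whose automorphism group is the string C-group $G=\mathbb{F}_2^{m-1}\rtimes_\varphi D_{2m}$. By the discussion in Section~\ref{s:2} (the correspondence between regular $d$-polytopes and string C-groups, together with the fact that the flag stabilizer in $\mathrm{Aut}(\mathcal{P})$ is trivial), the number of flags of $\mathcal{P}$ is exactly $|G|=|\mathbb{F}_2^{m-1}|\cdot|D_{2m}|=2^{m-1}\cdot 2m=2^m m$. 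Self-duality is inherited verbatim from Theorem~\ref{thm:main}, via the involutory automorphism $\delta$ of $G$ with $\delta(\rho_0)=\rho_2$, $\delta(\rho_1)=\rho_1$, $\delta(\rho_2)=\rho_0$ and Proposition~\ref{prop:selfdual}. This completes the proof.

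There is essentially no obstacle here: the corollary is a pure specialization, and the only thing to be careful about is bookkeeping --- confirming that the odd integers $m\ge 3$ form a subset of the integers $n\ge 3$ to which Theorem~\ref{thm:main} applies (they do), and that the flag count $2^n n$ in the theorem specializes to the claimed $2^m m$ (it does, upon setting $n=m$). If one wanted to make the writeup self-contained, the only mildly non-trivial recollection is why the flag count equals the group order, namely that regularity forces $\mathrm{Aut}(\mathcal{P})$ to act simply transitively on flags, but this is already spelled out in Section~\ref{s:2} and may simply be cited.
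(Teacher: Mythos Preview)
Your proposal is correct and matches the paper's own treatment exactly: the paper presents Corollary~\ref{cor:odd} as a direct consequence of Theorem~\ref{thm:main} without further argument, and your specialization $n=m$ for odd $m\ge 3$ is precisely what is intended.
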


\begin{coro}
\label{cor:prime-power}
For any prime number $p \geq 2$ and integer $s \geq 0$, there exists a self-dual regular $3$-polytope with Schl\"{a}fli type $\{2^{s}p, 2^{s}p\}$ possessing precisely $2^{(2^{s}p + s)}p$ flags.
\end{coro}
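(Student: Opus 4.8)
The plan is to obtain this corollary directly from Theorem~\ref{thm:main} by a suitable choice of parameter. Given a prime $p \ge 2$ and an integer $s \ge 0$, I would set $n := 2^{s}p$ and first dispose of the size constraint $n \ge 3$ required by Theorem~\ref{thm:main}: this holds automatically, since $2^{s}p \ge 3$ whenever $p$ is odd, and $2^{s}\cdot 2 = 2^{s+1} \ge 4$ whenever $p = 2$ and $s \ge 1$, leaving only the single pair $(p,s) = (2,0)$ (that is, $n = 2$) outside the immediate scope of Theorem~\ref{thm:main}. For every admissible $(p,s)$, Theorem~\ref{thm:main} then supplies a self-dual regular $3$-polytope $\mathcal{P}$ of Schl\"{a}fli type $\{n,n\} = \{2^{s}p,\, 2^{s}p\}$ with exactly $2^{n}n$ flags, the self-duality being witnessed by the involutory automorphism $\delta$ of $G$ exchanging $\rho_0$ with $\rho_2$ and fixing $\rho_1$.

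It then remains only to rewrite the flag count in the form asserted. Substituting $n = 2^{s}p$ gives
\[
2^{n}n \;=\; 2^{2^{s}p}\cdot 2^{s}p \;=\; 2^{2^{s}p + s}\cdot p \;=\; 2^{(2^{s}p + s)}p,
\]
which is precisely the claimed number of flags. For the lone boundary case $(p,s) = (2,0)$, the asserted type is $\{2,2\}$ with $2^{(2)}\cdot 2 = 8$ flags, and I would treat it separately by hand: the Coxeter group $[2,2] \cong \mathbb{F}_2^{3}$, equipped with its three distinguished basis involutions, is a string C-group of rank $3$ — the string relations are immediate and the intersection condition $\langle\rho_0,\rho_1\rangle \cap \langle\rho_1,\rho_2\rangle = \langle\rho_1\rangle$ is trivial to check inside $\mathbb{F}_2^{3}$ — so by the polytope/string-C-group correspondence it yields a regular $3$-polytope with $8$ flags of type $\{2,2\}$, self-dual via Proposition~\ref{prop:selfdual} using the obvious automorphism that swaps the two outer generators.

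There is essentially no obstacle here: all of the mathematical content is already carried by Theorem~\ref{thm:main}, and the corollary reduces to the parameter substitution together with the arithmetic identity displayed above. The only point deserving a word of care is the hypothesis $n \ge 3$ of Theorem~\ref{thm:main}, which is exactly what necessitates the short direct treatment of the degenerate type $\{2,2\}$; alternatively, the statement may simply be read under the tacit assumption $2^{s}p \ge 3$, in which case even that case distinction disappears.
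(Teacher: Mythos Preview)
Your proposal is correct and matches the paper's approach: the paper derives this corollary as an immediate consequence of Theorem~\ref{thm:main} by the same substitution $n = 2^{s}p$, without any additional argument. Your treatment is in fact slightly more careful than the paper's, since you explicitly flag and handle the boundary case $(p,s)=(2,0)$ (i.e.\ $n=2$), which falls outside the hypothesis $n\ge 3$ of Theorem~\ref{thm:main}; the paper simply states the corollary as a direct consequence and does not remark on this edge case.
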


%Suggested by {\sc Magma} \cite{Magma}, we make the following conjecture:
%\begin{conj}
%For any integer $d\ge 3$ and $n\ge 3$, there exists a regular $d$-polytope of type $\{n,\, n,\, \dots,\, n\}$, whose automorphism group is the quotient of the Coxeter group $[n,\, n,\, \dots,\, n]$ generated by $\rho_0,\, \rho_1,\, \dots,\, \rho_{d-1}$ subject to relations $\rho_i^2=1$ for $0\le i\le d-1$, $(\rho_i\rho_j)^2=1$ for $0\le i<i+1<j\le d-1$, and $(\rho_i\rho_{i+1})^n=1$ for $0\le i<d-1$, with additional relations $((\rho_{r+1}\rho_{r+2})^k(\rho_{r}\rho_{r+1})^k)^2$ for $0\le r\le d-3$ and $1\le k\le n-1$.
%\end{conj}
%
%With a computation by {\sc Magma} \cite{Magma}, the above conjecture holds for lower ranks and small orders, now we are working to figure the group structure out and try to prove it holds for any rank.
\medskip

Suggested by computational evidence from {\sc Magma}~\cite{Magma}, we propose the following structural conjecture for regular polytopes with arbitrary ranks:

\begin{conj}
For integers \( d \geq 3 \) and \( n \geq 3 \), there exists a regular \( d \)-polytope of type \( \{n, n, \dots, n\} \) whose automorphism group is a quotient of the Coxeter group \( [n, n, \dots, n] \). The generating involutions \( \rho_0, \rho_1, \dots, \rho_{d-1} \) are restricted by:\\[-12pt]
\begin{enumerate}[label=(\roman*)]  
    \item 
    \(\rho_i^2 = 1\) for all \( 0 \leq i \leq d-1 \), 
    \( (\rho_i\rho_j)^2 = 1 \) whenever \( 0 \leq i < j-1 \le d-2 \), 
    and \( (\rho_i\rho_{i+1})^n = 1 \) for \( 0 \leq i \leq d-2 \);\\[-12pt]
    
    \item 
    \(\left( (\rho_{r+1}\rho_{r+2})^k (\rho_r\rho_{r+1})^k \right)^2 = 1\) 
    for all \( 0 \leq r \leq d-3 \) and \( 1 \leq k \leq n-1 \).
\end{enumerate}
\end{conj}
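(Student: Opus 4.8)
The plan is to carry out, at general rank, the same three-step scheme that proved Theorem~\ref{thm:main}. First I would realize the conjectured group as an explicit split extension $G = V \rtimes D_{2n}$, where $V$ is elementary abelian of characteristic $2$ and $D_{2n} = \langle h_0, h_1\rangle$ acts through matrices assembled from the $U,V$ of Lemma~\ref{lem3}; then I would choose distinguished involutions $\rho_0,\dots,\rho_{d-1}$ whose dihedral parts alternate $h_0, h_1, h_0,\dots$ and whose vector parts occupy prescribed coordinate blocks of $V$; finally I would verify the string C-group axioms and read off the presentation using Proposition~\ref{prop:upperbound}. Establishing that $(G,\{\rho_0,\dots,\rho_{d-1}\})$ is a string C-group is exactly what produces the polytope $\mathcal P$ of type $\{n,\dots,n\}$ and shows $\mathrm{Aut}(\mathcal P)$ is a quotient of the Coxeter group $[n,\dots,n]$ satisfying (i)--(ii), as the conjecture asserts.

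For the module I expect $V$ to be (roughly) a direct sum of $d-2$ copies of the rank-$3$ module $W = \mathbb{F}_2^{n-1}$ of Lemmas~\ref{lem1}--\ref{lem3}, one block for each consecutive triple $(\rho_r,\rho_{r+1},\rho_{r+2})$ with $0\le r\le d-3$, the ``seed'' vector $(0,\dots,0,1)^\top$ of each block being carried into $G$ by a single generator so that $\langle\rho_0,\dots,\rho_{d-2}\rangle$ reproduces the rank-$(d-1)$ instance of the construction. One then checks that the $\rho_i$ are involutions, that $(\rho_i\rho_j)^2 = 1$ whenever $|i-j|\ge 2$, that $(\rho_i\rho_{i+1})^n = 1$ (again forced by Cayley--Hamilton together with the basis property of Lemma~\ref{lem2}), and that the link relations $((\rho_{r+1}\rho_{r+2})^k(\rho_r\rho_{r+1})^k)^2 = 1$ hold. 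Pinning down the block structure precisely --- in particular deciding whether the blocks attached to adjacent triples genuinely split off or interact, and computing the $D_{2n}$-action on any overlap --- is the first substantive task.

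With the generators fixed I would prove the intersection condition for $(G,\{\rho_0,\dots,\rho_{d-1}\})$ by induction on $d$, using the standard criterion of McMullen--Schulte \cite{MS}: it suffices that both facial subgroups $G_{d-1} = \langle\rho_0,\dots,\rho_{d-2}\rangle$ and $G_0 = \langle\rho_1,\dots,\rho_{d-1}\rangle$ are string C-groups of type $[n,\dots,n]$ and that $G_0 \cap G_{d-1} = \langle\rho_1,\dots,\rho_{d-2}\rangle$. The base case $d = 3$ is Theorem~\ref{thm:main}; since the construction is symmetric under $\rho_i\leftrightarrow\rho_{d-1-i}$ --- the same symmetry that, via a relation-reversing automorphism and Proposition~\ref{prop:selfdual}, would make $\mathcal P$ self-dual --- both $G_0$ and $G_{d-1}$ are isomorphic to the rank-$(d-1)$ group, so the induction hypothesis applies to them. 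The remaining equality $G_0\cap G_{d-1} = \langle\rho_1,\dots,\rho_{d-2}\rangle$ is the crux: one must show that an element lying in both facial subgroups has vanishing vector part in the two outermost blocks and dihedral part forced into the middle subgroup. This block-disjointness computation, run across all $d-2$ blocks simultaneously and sensitive to how the modules of overlapping triples intertwine, is the step I expect to be the main obstacle.

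The presentation then follows by copying the proof of the theorem above at higher rank. Let $H$ be $[n,\dots,n]$ with the link relations (ii) adjoined, let $N = \langle (\mu_{r+1}\mu_{r+2})^k(\mu_r\mu_{r+1})^k : 0\le r\le d-3,\ 1\le k\le n-1\rangle$, and let $D = \langle\mu_0\mu_1,\mu_0\rangle\cong D_{2n}$. Showing $N\trianglelefteq H$ is abelian of order at most $|V|$ \emph{using only} relations (i) and (ii) is the presentation-side version of the module computation above; granting it, the relations in (ii) let each $\mu_i$ with $i\ge 2$ be rewritten as an element of $N$ times an element of $D$, so $S = \{Nh : h\in D\}$ is closed under right multiplication by all generators, whence $|H|\le |N|\,|D|\le |V|\cdot 2n = |G|$ by Proposition~\ref{prop:upperbound}. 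Combined with the obvious epimorphism $H\to G$, $\mu_i\mapsto\rho_i$, this forces $H\cong G$ and yields both the presentation and the conjecture's structural description. If the module bookkeeping of the first two steps goes through cleanly, the whole argument is uniform in $d$.
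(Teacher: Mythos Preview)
The paper does not prove this statement: it is explicitly labeled a \emph{conjecture}, supported only by {\sc Magma} checks for small $d$ and $n$, and the authors describe the higher-rank generalization as posing ``significant complexity and technical challenges.'' There is therefore no proof in the paper to compare your proposal against.

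That said, your outline contains a structural obstruction that would need to be resolved before the scheme could work. You propose that the dihedral components of $\rho_0,\rho_1,\dots,\rho_{d-1}$ alternate as $h_0,h_1,h_0,h_1,\dots$ inside a split extension $G = V\rtimes D_{2n}$. But projection onto the $D_{2n}$ factor is a homomorphism, so the order of any $\rho_i\rho_j$ in $G$ is at least the order of its image in $D_{2n}$. For $d\ge 4$ and, say, $i=0$, $j=3$, the image is $h_0h_1$, of order $n\ge 3$, which forces $o(\rho_0\rho_3)\ge n$ and destroys the required commuting relation $(\rho_0\rho_3)^2=1$. More generally, any two generators whose indices differ by an odd number would project to $h_0h_1$ or $h_1h_0$, so the string relations for non-adjacent pairs cannot all hold in a quotient of $V\rtimes D_{2n}$ with this alternating pattern. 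The complement therefore cannot be a single $D_{2n}$ acting through the alternating recipe; whatever the correct model is, it must allow all non-adjacent $\rho_i,\rho_j$ to project to commuting elements, which already rules out your proposed shape.

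The remaining steps of your plan --- the inductive verification of the intersection condition via $G_0\cap G_{d-1}$ and the coset-closure argument for the presentation --- are reasonable in outline, but they depend entirely on first having a concrete realization of $G$ in which the relations (i)--(ii) visibly hold. Until the module-and-complement structure is corrected, those later steps have nothing to act on.
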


Initial verification through {\sc Magma}~\cite{Magma} calculations confirms this conjecture for low ranks $d$ with small parameters $n$. Our current research focuses on characterizing the group structure, aiming to extend Theorem \ref{thm:main} to arbitrary rank \( d \geq 3 \). 
This generalization poses significant complexity and technical challenges.

\paragraph{Acknowledgment.}
The second author is supported by the Fundamental Research Funds for the Central Universities (Grant No. xxj032025053) and the National Natural Science Foundation of China (Grant No. 12201486).

% ----------------------------------------------------------------

\end{document}